\documentclass[a4paper,12pt,reqno]{amsart}

\usepackage[
backend=biber,
style=alphabetic,
sorting=nyt
]{biblatex}

\usepackage{amsfonts}
\usepackage{amsmath}
\usepackage{mathrsfs}
\usepackage{amssymb}
\usepackage{amsthm}
\usepackage{amscd}
\usepackage{latexsym}
\usepackage{amstext}
\usepackage{amsxtra}
\usepackage[utf8]{inputenc}
\usepackage[english]{babel}

\usepackage{geometry}
\usepackage{color}
\usepackage{paralist}

\usepackage[colorinlistoftodos]{todonotes}
\usepackage[all]{xy}
\usepackage{tikz}
\usepackage{tikz-cd}

\usepackage{enumerate}
\usepackage{multirow}
\usepackage{wasysym}
\usepackage{latexsym} 
\usepackage{comment}
\usepackage{colonequals}

\usepackage[
        draft=false,
        colorlinks, citecolor=darkgreen,
        pdfauthor={L.Fassina,G Pirola},
        pdftitle={},
        linktocpage        
]{hyperref}
\hypersetup{citecolor=blue,linktocpage}

\newtheorem{theorem}{Theorem}[section]
\newtheorem*{theorem*}{Theorem}

\newtheorem{corollary}[theorem]{Corollary}

\newtheorem{remark}[theorem]{Remark}
\newtheorem{definition}[theorem]{Definition}

\newcommand{\nc}{\newcommand} 
\nc{\cH}{{\mathcal H}}
\nc{\cA}{{\mathcal A}}
\nc{\cG}{{\mathcal G}}
\nc{\cC}{{\mathcal C}}
\nc{\cD}{{\mathcal D}}
\nc{\cO}{{\mathcal O}}
\nc{\cI}{{\mathcal I}}
\nc{\cB}{{\mathcal B}}
\nc{\cY}{{\mathcal Y}}
\nc{\cK}{{\mathcal K}} 
\nc{\cX}{{\mathcal X}}
\nc{\cS}{{\mathcal S}}
\nc{\cE}{{\mathcal E}}
\nc{\cF}{{\mathcal F}}
\nc{\cZ}{{\mathcal Z}}
\nc{\cQ}{{\mathcal Q}}
\nc{\cN}{{\mathcal N}}
\nc{\cP}{{\mathcal P}}
\nc{\cL}{{\mathcal L}}
\nc{\cM}{{\mathcal M}}
\nc{\cT}{{\mathcal T}}
\nc{\cW}{{\mathcal W}}
\nc{\cU}{{\mathcal U}}
\nc{\cJ}{{\mathcal J}}
\nc{\cV}{{\mathcal V}}
\nc{\bH}{{\mathbb H}}
\nc{\bA}{{\mathbb A}}
\nc{\bG}{{\mathbb G}}
\nc{\bC}{{\mathbb C}}
\nc{\bO}{{\mathbb O}}
\nc{\bI}{{\mathbb I}}

\nc{\bB}{{\mathbb B}}
\nc{\bY}{{\mathbb Y}}
\nc{\bK}{{\mathbb K}} 
\nc{\bX}{{\mathbb X}}
\nc{\bS}{{\mathbb S}}
\nc{\bE}{{\mathbb E}}
\nc{\bF}{{\mathbb F}}
\nc{\bZ}{{\mathbb Z}}
\nc{\bQ}{{\mathbb Q}}
\nc{\bN}{{\mathbb N}}
\nc{\bP}{{\mathbb P}}
\nc{\bL}{{\mathbb L}}
\nc{\bM}{{\mathbb M}}
\nc{\bT}{{\mathbb T}}
\nc{\bW}{{\mathbb W}}
\nc{\bU}{{\mathbb U}}
\nc{\bD}{{\mathbb D}}
\nc{\bJ}{{\mathbb J}}
\nc{\bV}{{\mathbb V}}
\nc{\bbZ}{{\mathbb Z}}
\nc{\bR}{{\mathbb R}}
\nc{\fr}{{\rightarrow}}
\nc{\co}{{\nabla}}
\newcommand{\debar}{{\bar\partial}}

\nc{\cu}{{\barline{\nabla}}}

\nc{\OO}{\mathcal{O}}
\nc{\PP}{\mathbb{P}}

\nc{\fA}{{\mathfrak{A}}}
\nc{\fB}{{\mathfrak{B}}}
\nc{\fC}{{\mathfrak{C}}}
\nc{\fD}{{\mathfrak{D}}}
\nc{\fE}{{\mathfrak{E}}}
\nc{\fF}{{\mathfrak{F}}}

\begin{document}


\title{On the Rank of Normal Functions on Pointed Curves}
\date{\today}

\author{Lorenzo Fassina}
\address{Dipartimento di Matematica,
	Universit\`a degli Studi di Pavia,
	Via Ferrata, 5
	I-27100 Pavia, Italy}
\email{lorenzo.fassina02@universitadipavia.it}

\subjclass[2020]{14H10, 14H40, 14C22, 14C30, 14D07}

\keywords{Normal functions; Griffiths infinitesimal invariant;Abelian strata; double ramification loci.}


\begin{abstract}
We study a natural class of normal functions on the moduli space of pointed curves and determine their rank at every point. We prove that the rank is maximal outside the corresponding stratum of Abelian differentials, while along this stratum it drops by exactly one. As a consequence, we describe the local geometry of the associated double ramification locus.
\end{abstract}

\maketitle


\section{Introduction}

Normal functions play a fundamental role in the study of algebraic cycles via the Abel--Jacobi map. In this context, Griffiths \cite{infinitesimal} developed their infinitesimal study, which captures subtle geometric information on how algebraic cycles vary in families. Further developments by Green \cite{Green} and Voisin \cite{Voisin-une-remarque} have clarified the deep Hodge-theoretic nature of these objects.

A central role in this theory is played by the \textit{Griffiths infinitesimal invariant} which measures the first-order variation of a normal function and provides an obstruction to be locally constant.

The aim of this paper is to study, in the setting of pointed curves, the
loci where a natural class of normal functions
constructed from relative divisors are locally constant. Let us introduce the setting. Throughout the paper we work in the complex-analytic category. Let
\[
    \mu \colon Y \longrightarrow \mathcal{M}_{g,n}
\]
be an étale analytic chart of the moduli stack of smooth \(n\)-pointed curves of genus \(g\geq 2\), where \(Y\) is a connected complex manifold, and let
\begin{equation}\label{definition of family}
    \pi\colon (\mathcal{C},p_1,\ldots,p_n)\longrightarrow Y
\end{equation}
be the corresponding family. Thus, \(\pi\) is a smooth proper holomorphic family of projective curves and $p_i\colon Y\longrightarrow\mathcal{C}$ for $i=1,\ldots,n$ 
are pairwise disjoint holomorphic sections. For \(y\in Y\), we write
\[
    C_y:=\pi^{-1}(y),
    \qquad
    \mu(y)=[C_y;p_1(y),\ldots,p_n(y)].
\]

For every \(d\in\mathbb Z\), we denote by
\[
    \operatorname{Pic}^d(\pi)\longrightarrow Y
\]
the degree-\(d\) component of the relative Picard bundle associated with \(\pi\); its fibre over \(y\in Y\) is naturally identified with $\operatorname{Pic}^d(C_y)$.
In particular,
\[
    \operatorname{Pic}^0(\pi)\simeq \mathcal{J}
\]
is the relative Jacobian of the family. An holomorphic section $\nu$ of $\operatorname{Pic}^0(\pi)$ is called \emph{normal function}.

Let $\mathbf{m}:=(m_1,\dots,m_n) \in \mathbb{Z}^n$ be a  $n$-tuple of non-zero integers such that $m_1\leq\cdots\leq m_n$ and $d =\sum_{i=1}^n m_i$ and let
\begin{equation}\label{def-psi_m}
    \psi_\mathbf{m}:Y\longrightarrow \operatorname{Pic}^d(\pi),
\qquad
\psi_\mathbf{m}(y)=\mathcal O_{C_y}(D_y),
\end{equation}

be the holomorphic section of $\operatorname{Pic}^d(\pi)$ where $D_y$ is the divisor of degree $d$ defined by $\sum_{i=1}^n m_i p_i(y)$.

For a divisor $D$ on $C$, we denote by
\[
    \operatorname{Supp}(D)
\]
the set of points appearing in $D$ with non-zero coefficient.

The normal function studied in this paper is
\begin{equation}\label{eq:normal-function}
    \nu :=
    (2g-2)\psi_{\mathbf m}-d\kappa 
\end{equation}
where $\kappa\colon
    Y\longrightarrow\operatorname{Pic}^{2g-2}(\pi)$, defined by $\kappa(y):=\omega_{C_y}$, is the canonical section.
Equivalently, for every \(y\in Y\),
\[
    \nu(y)
    =
    \mathcal O_{C_y}\bigl((2g-2)D_y\bigr)
    \otimes
    \omega_{C_y}^{-d}.
\]

The object that will play a central role in the paper is the
space \(\mathcal N_y^\nu\) (see Definition~\ref{def:infinitesimal-local-constancy-space}),
consisting of those tangent directions along which the normal function
\(\nu\) is locally constant to first order at \(y\). Via the modular map, we
regard it as a subspace
\[
    \mathcal N_y^\nu
    \subset
    T_{[C_y;p_1(y),\ldots,p_n(y)]}\mathcal M_{g,n}.
\]
Since the fibres of
the relative Jacobian have dimension \(g\), one always has
\[
    \operatorname{codim} \mathcal N_y^\nu \leq g.
\]
The maximal possible codimension of \(\mathcal N_y^\nu\) is therefore \(g\). General results on the Betti map, due to Andr\'e--Corvaja--Zannier \cite{Andre-Corvaja-Zannier} and Gao \cite{Gao-Betti}, imply in this setting that this maximal value is attained at a general point, while Gao--Zhang \cite{Gao-Zhang-rank} developed a general framework for the corresponding degeneracy loci of normal functions. Our aim is more precise: we determine \(\operatorname{codim}\mathcal N_y^\nu\) at every point and identify exactly the locus where the maximal value \(g\) fails to occur.

Remarkably, the only source of such a degeneration is provided by the
geometry of strata of Abelian differentials. These strata are classical objects which have
been extensively studied from several points of view. Their connected
components were classified by Kontsevich--Zorich
\cite{Kontsevich}, while their geometry and
compactifications have been investigated, among others, by
Farkas--Pandharipande \cite{farkas-panda} and
Bainbridge--Chen--Gendron--Grushevsky--M\"oller \cite{BCGGM}.
When $d \neq 0$ set
\[
    a_i:=\frac{(2g-2)m_i}{d},
    \qquad i=1,\ldots,n.
\]
Assume that the \(a_i\) are positive integers and consider the locus
\[
    \mathcal P(a_1,\ldots,a_n)
    :=
    \left\{
    (C;p_1,\ldots,p_n)\in\mathcal M_{g,n}
    \ \middle|\
    \omega_C
    \simeq
    \mathcal O_C\!\left(\sum_{i=1}^n a_i p_i\right)
    \right\}.
\]
This is the projectivized stratum of Abelian differentials, viewed inside
\(\mathcal M_{g,n}\), and it has codimension \(g-1\).

The relevance of this locus to our problem is already visible directly from
the definition of \(\nu\). Indeed, along
\(\mathcal P(a_1,\ldots,a_n)\) one has
\[
    (2g-2)D
    \sim
    dK_C,
\]
and hence the normal function \(\nu\) vanishes identically. Therefore its
first-order variation vanishes along every tangent direction to the stratum,
so that
\[
    T_{[C;p_1,\ldots,p_n]}
    \mathcal P(a_1,\ldots,a_n)
    \subseteq
    \mathcal N_y^\nu.
\]
Since \(\mathcal P(a_1,\ldots,a_n)\) has codimension \(g-1\), this immediately
forces
\[
    \operatorname{codim} \mathcal N_y^\nu \leq g-1
\]
along the stratum.

This explains geometrically why the expected codimension \(g\) may fail:
on the Abelian stratum there is an entire family of infinitesimal
deformations along which the normal function is forced to remain constant.
The natural question is whether further degeneracies may occur, either
along this locus or elsewhere. Our main theorem shows that this does not
happen: the above geometric obstruction accounts for the whole
codimension drop.


\begin{theorem}\label{thm:max-rank-nu}
    Let $\mathbf{m} = (m_1,\dots,m_n) \in \mathbb{Z}^n$ be a $n$-tuple of non-zero integers with $\sum_{i = 1}^{n}m_i = d$ and  $m_1\leq\dots\leq m_n$. Let $\psi_\mathbf{m}$ as in \eqref{def-psi_m} and $\nu$ be the associated normal function \eqref{eq:normal-function}
    $$
        \nu=(2g-2)\psi_\mathbf{m}-d\kappa.
    $$
    For a point $y \in Y$ the spaces \(\mathcal N_y^\nu\) have the following behaviour.

\begin{enumerate}
    \item Suppose that $d\neq 0$, that $a_i = \frac{(2g-2)m_i}{d}$ are positive integers for $i = 1,\dots, n$ and that
    \[
    [C_y;p_1(y),\ldots,p_n(y)]
    \in
    \mathcal P(a_1,\ldots,a_n),
    \]
    then
    \[
    \operatorname{codim}_{T_{[C_y;p_i(y)]}\mathcal M_{g,n}}\mathcal N_y^\nu =  g-1.
    \]
    \item Otherwise,
    \[
    \operatorname{codim}_{T_{[C_y;p_i(y)]}\mathcal M_{g,n}}\mathcal N_y^\nu=g.
    \]

\end{enumerate}
\end{theorem}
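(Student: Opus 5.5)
We compute the differential of \(\nu\) at \(y\) explicitly and reduce the rank statement to a statement about differentials with prescribed poles. Tangent vectors to \(\mathcal M_{g,n}\) at \([C;p_1,\dots,p_n]\) are classes \(\xi\in H^1(C,T_C(-\sum p_i))\). The infinitesimal variation of \(\nu\) is the composition
\[
 T_y Y \xrightarrow{d\nu} H^1(C,\mathcal O_C)=H^0(C,\omega_C)^\vee .
\]
The codimension of \(\mathcal N^\nu_y\) is the rank of \(d\nu\), equivalently the dimension of the image of the dual map \(H^0(\omega_C)\to H^0(\omega_C^{2}(\sum p_i))\).

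The first step is to write this dual map. For the section \(\psi_{\mathbf m}\), the derivative paired with \(\omega\in H^0(\omega_C)\) is the residue-type expression \(\sum_i m_i\,\omega(p_i)\,\xi|_{p_i}\); this is the classical derivative of the Abel–Jacobi map of a moving point. For \(\kappa\), the derivative paired with \(\omega\) is \(\langle \xi, \omega\cdot\eta\rangle\), where \(\eta\) is a meromorphic \(1\)-form. Concretely, \(\kappa\) differs from \(\psi\) of a canonical divisor, and by a standard computation (Green/Voisin type) the variation of \(\omega_C\) in \(\operatorname{Pic}\) is measured through the Laplacian of the Arakelov–Green function, i.e. by a meromorphic differential \(\eta\) with double poles.

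Since this can be awkward, I would instead use an algebraic formulation. Choose a canonical divisor \(K\), and let \(\phi\) be the unique meromorphic differential of the third kind with residues \((2g-2)m_i\) at \(p_i\), residue \(-d\) along \(K\), and purely imaginary periods. Then \(\nu=0\) in \(\operatorname{Pic}^0\) to first order in the direction \(\xi\) if and only if the pairing vanishes. The resulting dual map should be \(\omega\mapsto \omega\cdot\phi_\nu\), where \(\phi_\nu\) is the meromorphic differential with simple poles at the \(p_i\) of residue \((2g-2)m_i\), plus a term \(-d\,\omega_0\) accounting for \(\kappa\). I expect the cleanest expression to be
\[
 \omega\longmapsto \omega\cdot\phi \in H^0\bigl(\omega_C^2(\textstyle\sum p_i)\bigr)/(\text{image of }\kappa\text{-term}).
\]
I would derive it by pairing against Kodaira–Spencer classes through Čech cochains on small discs around the \(p_i\).

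The key step is then injectivity. The multiplication map \(H^0(\omega_C)\to H^0(\omega_C^2(\sum p_i))\), \(\omega\mapsto\omega\phi\), is injective whenever \(\phi\neq0\), because the target is torsion-free. So the rank is \(g\) unless some combination degenerates. Degeneracy occurs exactly when the quadratic differential \(\omega\phi\) lies in the image of \(H^0(T_C(-\sum p_i))^\perp\)-trivial part, i.e. when it is zero as a functional on \(H^1(T_C(-\sum p_i))\). By Serre duality this pairing is perfect, so the kernel of \(d\nu^\vee\) consists of the \(\omega\) with \(\omega\phi\equiv0\) modulo the correction coming from \(\kappa\). Unwinding the \(\kappa\) correction, the kernel is nonzero precisely when there is \(\omega_0\in H^0(\omega_C)\) with \(\operatorname{div}\omega_0=\sum a_ip_i\), and the kernel is then spanned by \(\omega_0\). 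This requires \(d\neq0\) and \(a_i\in\mathbb Z_{>0}\), which gives codimension \(g\) off \(\mathcal P(a_1,\dots,a_n)\). When \(d=0\), \(\nu=(2g-2)\psi_{\mathbf m}\) and the pure residue map gives rank \(g\) directly.

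On the stratum, the introduction already gives \(\operatorname{codim}\le g-1\). Showing that the kernel is exactly one-dimensional then gives equality \(g-1\).

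The main obstacle is the correct computation of the derivative of \(\kappa\). It is not a point-section, and its contribution mixes with \(\psi_{\mathbf m}\). I would handle it by differentiating the identity \(\kappa=\psi_{\operatorname{div}\omega_0}\) locally for a varying section \(\omega_0\) of the Hodge bundle, if such a section exists nearby. Failing that, I would use the second fundamental form of the Hodge bundle: the kernel condition becomes \(\sum_i (2g-2)m_i\,\omega(p_i)\xi(p_i)=d\,\langle\xi,\omega\cdot\omega'\rangle\) for all \(\xi\), and the analysis then reduces to whether \(\omega\) divides a prescribed quadratic differential.
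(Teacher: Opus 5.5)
Your reduction is sound: $\operatorname{codim}\mathcal N_y^\nu=\operatorname{rk}L$ for $L:=\operatorname{pr}_{\nu(y)}\circ d_y\nu\colon H^1(C,T_C(-E))\to H^1(C,\mathcal O_C)$. Equivalently it is $g-\dim\ker L^\vee$, with $L^\vee\colon H^0(C,\omega_C)\to H^0(C,\omega_C^{2}(E))$ and $E=\sum p_i$. But you never determine $L^\vee$, and for $d\neq0$ all of the content lies in the part you leave open.

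\emph{The fixed-$K$ formula is not well defined.} With a fixed canonical divisor $K$, the product $\omega\phi$ has poles along $K\not\subset E$. So it is not in $H^0(C,\omega_C^2(E))$ and does not pair with $H^1(C,T_C(-E))$. The reason is that the points of $K$ move with $\xi$ and contribute terms $\omega(z)\bigl(z'(\xi)\bigr)$, which you have not controlled. A local section $\omega_0$ of the Hodge bundle always exists, so your fallback does not fail for lack of $\omega_0$; it fails for this reason.

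\emph{The injectivity step cannot see the degeneracy.} The torsion-freeness/Serre-duality argument would give rank $g$ everywhere for any fixed $\phi\neq0$, which contradicts part (1). Consequently the sentence ``unwinding the $\kappa$ correction, the kernel is nonzero precisely when \dots'' is the entire theorem, stated without proof. The alternative criterion involving ``$\xi(p_i)$'' and an unspecified $\omega'$ is also not well defined for a general $\xi\in H^1(C,T_C(-E))$.

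\emph{The missing idea} is to let the canonical divisor depend on the form being tested, i.e.\ take $\omega_0=\omega$.
\begin{enumerate}
\item For $\omega\neq0$, represent $\nu$ near $y$ by the divisor $R_\omega:=(2g-2)D-d\operatorname{div}(\omega)$. Let $\Phi_\omega$ be any differential of the third kind with residue divisor $R_\omega$. Then $\omega\Phi_\omega\in H^0(C,\omega_C^2(E))$.
\item For every $\xi$ with $dq(\xi)\cdot\omega=0$, Theorem~\ref{Griff_theorem} together with Stokes gives $\langle L(\xi),\omega\rangle=\delta\nu(y)(\omega\otimes\xi)=c\,\langle\xi,\omega\Phi_\omega\rangle$, with $c\neq0$ universal. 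The Betti normalization of the lift only matters outside this subspace.
\item The annihilator of $\{\xi: dq(\xi)\cdot\omega=0\}$ is $\omega\cdot H^0(C,\omega_C)$. Hence $L^\vee(\omega)=c\,\omega(\Phi_\omega+f)$ for some holomorphic $f$.
\item So $L^\vee(\omega)=0$ forces $R_\omega=0$, i.e.\ $d\neq0$ and $\operatorname{div}(\omega)=\sum a_ip_i$.
\end{enumerate}
It follows that $\ker L^\vee=0$ off $\mathcal P(a_1,\dots,a_n)$ and $\ker L^\vee\subseteq\mathbb C\,\omega_{\mathrm{ab}}$ on it. Together with the upper bounds you already have, this proves the theorem with no case distinctions.

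The paper uses the same mechanism: its diagonal coefficient $(2g-2)m_p-d\operatorname{ord}_p(\omega)$ is exactly the residue of $\Phi_\omega$ at $p$. It never writes down $L^\vee$, however. It only uses that $\delta\nu(y)(\omega\otimes\xi)=0$ for $\xi\in\mathcal N_y^\nu$. It makes this effective by building an adapted basis of $H^0(C,\omega_C)$ and $g$ (resp.\ $g-1$) vertical and higher Schiffer deformations centred at the marked points, on which Griffiths' formula is triangular. This exhibits an explicit subspace of dimension $g$ (resp.\ $g-1$) meeting $\mathcal N_y^\nu$ trivially.
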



Thus Theorem~\ref{thm:max-rank-nu} gives a pointwise refinement of the generic maximality results above: the maximal value $\operatorname{codim}\mathcal N_y^\nu=g$ holds at every point outside the Abelian stratum, while along the stratum the codimension drops by exactly one. The proof builds on the infinitesimal approach developed in \cite{fassina_pirola}, using Griffiths' infinitesimal invariant and higher Schiffer deformations.

The theorem also gives a direct consequence for the associated double
ramification locus. Double ramification loci and their cycle classes have been studied extensively; see, for instance, \cite{hain-moduli,Panda-pixton,Holmes-Kass-Pagani,BHPSS,chiodo-holmes}. Following the terminology of \cite{Panda-pixton,chiodo-holmes}, we define
the double ramification locus associated with the normal function
\(\nu\) by
\[
    \operatorname{DRL}_Y(\nu)
    :=
    \left\{
        y\in Y
        \ \middle|\
        \nu(y)\simeq\mathcal O_{C_y}
    \right\}.
\]
Equivalently, \(\operatorname{DRL}_Y(\nu)\) is the zero locus of the
section $\nu$.
Assume that \(d\neq0\) and that $a_i:=\frac{(2g-2)m_i}{d}$ for $i=1,\ldots,n$ are positive integers. We denote by $\mathcal P_Y(a_1,\ldots,a_n) := \mu^{-1}\bigl(\mathcal P(a_1,\ldots,a_n)\bigr) \subset Y$ the inverse image of the corresponding Abelian stratum. Since \(\mu\) is étale, \(\mathcal P_Y(a_1,\ldots,a_n)\) is smooth of codimension \(g-1\) (see \cite{Kontsevich}). 
Up to replacing \(\mathbf m\) by \(-\mathbf m\), we may assume \(d>0\). On \(\mathcal M_{g,n}\),the smoothness  and dimension of the zero locus of $\nu$ on the locus smooth curves are known from \cite[Proposition~1.2]{Schmitt}; see also \cite{Holmes–Schmitt} for the relation with the pluricanonical double ramification locus. In the present setting, the point is that this local geometry follows directly from the pointwise rank computation of Theorem~\ref{thm:max-rank-nu}, which also determines the rank away from the double ramification locus.

\begin{corollary}\label{cor:DRL}
Assume that \(d\neq0\) and that $a_i=\frac{(2g-2)m_i}{d}$ fo $i=1,\ldots,n,$
are positive integers. Then the double ramification locus is smooth
of codimension \(g\) at every point
\[
    y\in
    \operatorname{DRL}_Y(\nu)
    \setminus
    \mathcal P_Y(a_1,\ldots,a_n).
\]
Moreover, for every $y\in\mathcal P_Y(a_1,\ldots,a_n),$
there exists a neighbourhood \(V\) of \(y\) such that
\[
    \operatorname{DRL}_Y(\nu)\cap V
    =
    \mathcal P_Y(a_1,\ldots,a_n)\cap V.
\]
In particular, the double ramification locus is smooth of codimension
\(g-1\) along the Abelian stratum.
\end{corollary}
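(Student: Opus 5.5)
The plan is to deduce the corollary from Theorem~\ref{thm:max-rank-nu}, using that \(\operatorname{DRL}_Y(\nu)\) is the zero locus of a holomorphic section of the relative Jacobian \(\operatorname{Pic}^0(\pi)\to Y\), which has fibres of dimension \(g\). Near a point \(y\) with \(\nu(y)=0\) we trivialize \(\operatorname{Pic}^0(\pi)\) by a neighbourhood of the zero section. Concretely, we use the local biholomorphism with the vector bundle \(\mathcal V=(R^0\pi_*\omega_\pi)^\vee\) given by the exponential modulo the lattice. In this chart \(\nu\) becomes a holomorphic map \(f\colon U\to\mathbb C^g\) with \(f(y)=0\).

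At a zero of \(\nu\), the differential \(df_y\) coincides with the first-order variation \(\delta\nu_y\colon T_yY\to H^1(C_y,\mathcal O)\). The point to check against Definition~\ref{def:infinitesimal-local-constancy-space} is that \(\ker df_y=\mathcal N^\nu_y\). At a point where \(\nu\) vanishes, being locally constant to first order just means the derivative of \(\nu\) into the fibre vanishes, since the zero section is flat for the Gauss--Manin connection. Hence \(\operatorname{rank} df_y=\operatorname{codim}\mathcal N^\nu_y\).

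\emph{First statement.} Take \(y\in\operatorname{DRL}_Y(\nu)\setminus\mathcal P_Y(a_1,\ldots,a_n)\). By part (2) of Theorem~\ref{thm:max-rank-nu}, \(df_y\) has rank \(g\), so it is surjective. The holomorphic implicit function theorem then shows that \(f^{-1}(0)\) is a smooth submanifold of codimension \(g\) near \(y\).

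\emph{Second statement.} Take \(y\in\mathcal P_Y\). By part (1), \(\operatorname{rank} df_y=g-1\). Since the rank is lower semicontinuous, it is \(\ge g-1\) on a neighbourhood \(V\), and we shrink \(V\) so that \(f\) vanishes only near \(y\) inside the chart. We have \(\mathcal P_Y\cap V\subseteq \operatorname{DRL}_Y(\nu)\cap V\), because on the stratum \((2g-2)D\sim dK\).

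For the reverse inclusion, a constant-rank argument is not available directly, since points of \(V\setminus\mathcal P_Y\) have rank \(g\). Instead we choose coordinates. The map \(df_y\) has image a hyperplane \(H\subset\mathbb C^g\); pick a splitting \(\mathbb C^g=H\oplus L\) and write \(f=(f_H,f_L)\).

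\begin{itemize}
\item The component \(f_H\) is a submersion at \(y\) onto \(H\cong\mathbb C^{g-1}\), so \(Z:=f_H^{-1}(0)\) is smooth of codimension \(g-1\) near \(y\).
\item \(\mathcal P_Y\) is contained in \(Z\), is smooth, and also has codimension \(g-1\), by the smoothness cited from \cite{Kontsevich}. Hence \(\mathcal P_Y=Z\) near \(y\), both being connected germs of manifolds of the same dimension.
\end{itemize}

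It follows that \(\operatorname{DRL}_Y(\nu)\cap V=Z\cap f_L^{-1}(0)\subseteq Z=\mathcal P_Y\cap V\). Combined with the first inclusion this gives equality. Smoothness of codimension \(g-1\) is then inherited from \(\mathcal P_Y\).

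The main obstacle I expect is the identification \(\ker df_y=\mathcal N^\nu_y\) at zeros of \(\nu\), that is, reconciling the paper's Griffiths-invariant definition of first-order constancy with the naive differential in a local trivialization. If \(\mathcal N^\nu_y\) is defined via the infinitesimal invariant modulo \(\nabla\)-images, I would check that at a zero the extra quotient is trivial, since the zero section is a flat lift. Everything after that is the implicit function theorem.
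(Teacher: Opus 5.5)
Your argument is correct. For the first statement it is the paper's argument: rank $g$ at a zero plus the implicit function theorem, which the paper phrases as ``tangent space of codimension $g$ and local codimension at most $g$''. The second statement is handled differently. The paper notes $\mathcal P_Y\subseteq\operatorname{DRL}_Y(\nu)$, declares that ``both loci are smooth at $y$'', compares tangent dimensions and applies the inverse function theorem to the inclusion. Smoothness of $\operatorname{DRL}_Y(\nu)$ at $y$ is thus used before it is proved. It can be repaired via $\dim_y\mathcal P_Y\le\dim_y\operatorname{DRL}_Y(\nu)\le\dim T_y\operatorname{DRL}_Y(\nu)=\dim_y\mathcal P_Y$, with $T_y$ the Zariski tangent space. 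Your splitting $f=(f_H,f_L)$ sidesteps this. Locally, $Z=f_H^{-1}(0)$ is a smooth germ of codimension $g-1$ containing $\operatorname{DRL}_Y(\nu)$, the dimension count forces $Z=\mathcal P_Y$, and smoothness of the double ramification locus becomes a conclusion rather than an input. You also supply what the paper only asserts, namely ``$T_y\operatorname{DRL}_Y(\nu)=\mathcal N_y^\nu$ by Section~2''. The paper defines $\mathcal N_y^\nu=\ker(\operatorname{pr}_{\nu(y)}\circ d_y\nu)$, and the flat leaf through $0$ is the zero section. Hence $\operatorname{pr}_0\circ d_y\nu$ is exactly the intrinsic derivative $df_y$ of the lifted section at its zero. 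So the ``obstacle'' you flag is already settled by the definition, and no quotient by $\overline\nabla\mathcal F$ enters. Do not call this map $\delta\nu_y$, which in the paper denotes the Griffiths invariant. Both routes rest equally on the cited smoothness and codimension $g-1$ of $\mathcal P_Y$. The semicontinuity and shrinking remarks in your second step are unnecessary. Finally, since $(2g-2)m_i=da_i$, one has $\nu=d\,\nu'$ with $\nu'(y)=\mathcal O_{C_y}(\sum_i a_ip_i(y))\otimes\omega_{C_y}^{-1}$ and $\mathcal P_Y=\nu'^{-1}(0)$. Because the nonzero $d$-torsion sections stay locally away from the zero section, the equality $\operatorname{DRL}_Y(\nu)\cap V=\mathcal P_Y\cap V$ actually holds without any rank computation.
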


\begin{proof}
Let \(y\in\operatorname{DRL}_Y(\nu)\). By the
description of the zero locus given in Section~2, we have $T_y\operatorname{DRL}_Y(\nu)=\mathcal N_y^\nu.$
Suppose first that $y\notin\mathcal P_Y(a_1,\ldots,a_n).$
By Theorem~\ref{thm:max-rank-nu}, $\operatorname{codim}_{\mathbb C}\mathcal N_y^\nu=g.$
Hence the tangent space of
\(\operatorname{DRL}_Y(\nu)\) at \(y\) has codimension \(g\).
Since \(\operatorname{DRL}_Y(\nu)\) is locally defined as the zero
locus of a holomorphic section of a \(g\)-dimensional family of
complex tori, it has local codimension at most \(g\). Therefore
\(\operatorname{DRL}_Y(\nu)\) is smooth of codimension \(g\) at \(y\).

Assume now that $y\in\mathcal P_Y(a_1,\ldots,a_n).$
Since \(\nu\) vanishes identically along the Abelian stratum,
\[
    \mathcal P_Y(a_1,\ldots,a_n)
    \subseteq
    \operatorname{DRL}_Y(\nu),
\]
and both loci are smooth at \(y\).
Moreover, Theorem~\ref{thm:max-rank-nu} gives $\operatorname{codim}_{\mathbb C}\mathcal N_y^\nu=g-1.$
Thus
\[
    \dim_{\mathbb C}
    T_y\operatorname{DRL}_Y(\nu)
    =
    \dim_{\mathbb C}
    \mathcal P_Y(a_1,\ldots,a_n).
\]
Since $T_y\mathcal P_Y(a_1,\ldots,a_n)
    \subseteq
    T_y\operatorname{DRL}_Y(\nu),$
the two tangent spaces coincide. Hence the inclusion $\mathcal P_Y(a_1,\ldots,a_n) \hookrightarrow \operatorname{DRL}_Y(\nu)$ has invertible differential at \(y\). By the holomorphic inverse function theorem, the inclusion is a local biholomorphism at \(y\). Therefore there exists a neighbourhood \(V\subset Y\) of \(y\) such that 
\[ \operatorname{DRL}_Y(\nu)\cap V = \mathcal P_Y(a_1,\ldots,a_n)\cap V. \]
\end{proof}

In this sense, the
main theorem provides an infinitesimal explanation of the expected and
excess behaviour of the associated double ramification locus.

The paper is organized as follows. In Section~\ref{sec normal fun} we recall the basic theory of normal functions and we introduce the infinitesimal invariant, we define the rank
and the null space \(\mathcal N_y^\nu\). We then discuss, in Section~\ref{sec remarks on deformations}, infinitesimal deformations of
pointed curves and the role of Schiffer deformations. 
In
Section~\ref{section griff formula} we review the Griffiths formula for the
infinitesimal invariant applied on simple tensors.
In Section~\ref{sec abelian strata} we recall
the basic geometry of projectivized strata of abelian differentials and we
explain why the abelian stratum gives rise to a codimension drop of $\mathcal N_y^\nu$.   Finally in Section ~\ref{sec main theorem}, we construct suitable bases of holomorphic differentials
and use the Griffiths formula to prove the main theorem.

\section{Normal functions}\label{sec normal fun}

Let $\pi: \cC\to Y$ be a smooth family of complete complex curves of genus $g \geq 2$, where $Y$ is a connected complex variety, and set $C_y := \pi^{-1}(y)$ the fiber over $y \in Y$.

\subsection{Definition of normal functions}

Let
$$
 \frac{\mathcal{H}}{\mathcal{F} + R^1\pi_* \mathbb{Z}} \longrightarrow Y,
$$
be the jacobian fibration associated to the family $\pi$,
where $\mathcal{H} := R^1\pi_* \mathbb{C} \otimes_\mathbb C \mathcal{O}_Y$ is the flat holomorphic vector bundle associated to the local system $R^1\pi_* \mathbb{C}$, whose fiber at $y \in Y$ is 
$$
\mathcal{H}_y = H^1(C_y, \mathbb{C}),
$$ 
and $\mathcal{F}$ is the is the holomorphic subbundle whose fiber is 
$$
\mathcal{F}_y = H^0(C_y, \omega_{C_y}).
$$
Recall that the Picard bundle at degree $0$ is isomorphic with to the jacobian fibration, that is
\[
    \operatorname{Pic}^0(\pi) \simeq \mathcal{J}
\]

Sections of $\mathcal{J}$ are classically known as \emph{normal functions} (see \cite[Chap.~2, Sec.~7]{Voisin} and \cite[Chap.~9]{Carlson} for a more general definition).

Given $\mathcal{D} \subset \mathcal{C}$ be a relative divisor such that for every $y \in Y$ the divisor $D_y$ of $C_y$ has degree zero, we can define the associated normal function
\[
    \nu : Y \rightarrow \frac{\mathcal{H}}{\mathcal{F} + R^1\pi_* \mathbb{Z}}
\]
setting $\nu(y) = \mathcal{O}_{C_y}(D_y) \in \operatorname{Pic}^0(C_y)$, the Abel-Jacobi map.

\subsection{The Griffiths infinitesimal invariant}\label{sec inf invariant}
In this subsection, we review and elaborate on work of Griffiths\cite{infinitesimal}, Green\cite{Green} and Voisin\cite{Voisin-une-remarque} on invariants of normal functions associated to a family of curves. 

Recall that $\mathcal{H}$ is a flat vector bundle and hence it is equipped with a flat connection $\nabla$
\[
\nabla : \mathcal{H} \rightarrow \mathcal{H} \otimes \Omega^1_Y
\]
called the Gauss-Manin connection.

Set $\mathcal{H}^{0,1} := \frac{\mathcal{H}}{\mathcal{F}} \simeq \mathcal{F}^*$. Consider the bundle map, representing the IVHS map,
\[
    \overline{\nabla} : \mathcal{F} \to \mathcal{H}^{0,1} \otimes \Omega_Y^1,
\]
defined, over a point $y \in Y$, by
\[
    \overline\nabla_\xi(\omega) := \kappa_y(\xi) \cdot \omega \in H^1(\mathcal{O}_{C_y}) \simeq H^{0,1}(C)
\]
where
\[
    \kappa_y: T_yY \rightarrow H^1(T_{C_y})
\]
is the Kodaira-Spencer map and $\cdot$ is the cup product.
Following Voisin \cite[Chap.~2, Sec.~7]{Voisin} and Green~\cite{Green}, given a normal function $\nu$, we define the infinitesimal invariant. Let $\tilde{\nu}$ a local lifting of $\nu$ in $\mathcal{H}$, and consider the projection $\overline{\nabla \tilde{\nu}}$ of $\nabla \tilde{\nu}$ in the quotient
\[
    \frac{\mathcal{H}}{\mathcal{F}} \otimes \Omega^1_Y \simeq \mathcal{H}^{0,1} \otimes \Omega^1_Y.
\]
Then consider the class
\[
    [\overline{\nabla \tilde{\nu}}] \in \frac{\mathcal{H}^{0,1}\otimes \Omega_Y}{\overline\nabla \mathcal{F}}.
\]
This is well defined and does not depend on the choice of lifting (see Lemma~7.7 in \cite{Voisin}). We denote by $\delta\nu$ this class and call it the infinitesimal invariant. It is useful for the computation to define also the dual version of $\delta\nu$. Let us consider the transpose of the Gauss–Manin connection
\[
    \overline{\nabla}^t : \mathcal{F} \otimes T_Y\to \mathcal{H}^{0,1}
\] 
Choose a point $y \in Y$ , then $\overline{\nabla}^t : H^0(C_y, \omega_{C_y}) \otimes T_yY \to H^1(C_y, \mathcal{O}_{C_y})$ is given by
\begin{equation}\label{Gauss_Manin}
    \overline{\nabla}^t \left( \sum_i \omega_i \otimes \xi_i \right) = \sum_i \overline{\nabla}_{\xi_i}(\omega_i) = \sum_i \kappa_y(\xi_i) \cdot \omega_i.
\end{equation}
By duality we have
\[
    \frac{\mathcal{H}^{0,1} \otimes \Omega_Y}{\overline\nabla \mathcal{F}} \simeq \left(\ker(\overline{\nabla}^t)\right)^*,
\]
then we can consider $\delta\nu$ as a section of this last vector bundle. Over a point $y \in Y$, we have:
\begin{equation}\label{abel-integrals}
    \delta\nu(y)\left( \sum_i \omega_i \otimes \xi_i \right) = \sum_i \int_C \nabla_{\xi_i} \tilde{\nu}\wedge \omega_i,
\end{equation}
	where $\sum_i \kappa(\xi_i) \cdot \omega_i = 0$.

\subsection{Rank of a normal function}

Let $\mathcal{J}$ be the jacobian fibration. We have a diffeomorphism 
\[
    \mathcal{J} \stackrel{f}\rightarrow \frac{R^1\pi_*\mathbb{R}}{R^1\pi_*\mathbb{Z}}.
\]

Since $R^1\pi_*\mathbb{R}$ is a flat bundle, its locally constant
sections induce a natural foliation \(\mathscr L\) of \(\mathcal J\).
Following Hain \cite[Sec.~2.5]{Hain}, the leaves of \(\mathscr L\) are
complex submanifolds of \(\mathcal J\), although the foliation itself
need not be holomorphic.
Let
\[
    p:\mathcal J\longrightarrow Y
\]
be the projection. For \(v\in\mathcal J_y\), let
\(\mathscr L_v\subset T_v\mathcal J\) denote the tangent space to the
locally constant leaf through \(v\). The differential of \(p\) induces
a complex-linear isomorphism
\[
    dp_v:\mathscr L_v\xrightarrow{\sim}T_yY.
\]
Hence the exact sequence
\[
    0
    \longrightarrow
    T_v\mathcal J_y
    \longrightarrow
    T_v\mathcal J
    \xrightarrow{\,dp_v\,}
    T_yY
    \longrightarrow
    0
\]
is split by \(\mathscr L_v\). Using the natural identification $T_v\mathcal J_y
    \simeq
    \mathcal H_y/\mathcal F_y,$
we denote by
\[
    \operatorname{pr}_v:
    T_v\mathcal J
    \longrightarrow
    \mathcal H_y/\mathcal F_y
\]
the corresponding projection. Its kernel is precisely
\(\mathscr L_v\), and \(\operatorname{pr}_v\) is complex linear.

\begin{definition}\label{def:rank}
Let \(\nu:Y\to\mathcal J\) be a normal function. The rank of \(\nu\)
at \(y\in Y\) is
\[
    \operatorname{rk}_y(\nu)
    :=
    \operatorname{rk}_{\mathbb C}
    \left(
        \operatorname{pr}_{\nu(y)}\circ \, d_y\nu
    \right).
\]
\end{definition}

\begin{definition}\label{def:infinitesimal-local-constancy-space}
We define the \emph{null space} of \(\nu\) at \(y\) by
\[
    \mathcal N_y^\nu
    :=
    \ker
    \left(
        \operatorname{pr}_{\nu(y)}\circ \, d_y\nu
    \right)
    \subseteq T_yY.
\]
\end{definition}

Since $\operatorname{pr}_{\nu(y)}\circ \,d_y\nu:
    T_yY\longrightarrow\mathcal H_y/\mathcal F_y$
is complex linear, \(\mathcal N_y^\nu\) is a complex vector subspace of
\(T_yY\), and
\begin{equation}\label{codim < g}
    \operatorname{rk}_y(\nu)
    =
    \operatorname{codim}_{\mathbb C}\mathcal N_y^\nu
    \leq g.
\end{equation}

\begin{remark}\label{rem:real-rank} Let \(U\subset Y\) be simply connected and choose a flat trivialization, as a smooth group bundle, 
\[
f_U:\mathcal J|_U \xrightarrow{\sim} T^{2g}\times U.
\] Set $\phi_U:=\operatorname{pr}_1\circ f_U: \mathcal J|_U\longrightarrow T^{2g}.$ 
The fibres of \(\phi_U\) are precisely the locally constant leaves. Consequently, $\ker_{\mathbb R}d_y(\phi_U\circ\nu) = \bigl(\mathcal N_y^\nu\bigr)_{\mathbb R},$
and hence
\[
\operatorname{rk}_{\mathbb R} d_y(\phi_U\circ\nu) = 2\,\operatorname{rk}_y(\nu). 
\] 
Thus $\operatorname{rk}_y(\nu) = \frac12 \operatorname{rk}_{\mathbb R} d_y(\phi_U\circ\nu).$
\end{remark}


\subsection{Locally constant normal functions}

\begin{definition}\label{def:locally-constant}
A normal function \(\nu\) is said to be \emph{locally constant near}
\(y\in Y\) if, after restricting to a sufficiently small neighbourhood
\(U\) of \(y\), its image \(\nu(U)\) is contained in a single leaf of
the foliation \(\mathscr L\).
\end{definition}

An important example of a locally constant normal function is given by
a torsion section.

\begin{remark}\label{rem:global-rank}
Following Hain \cite[Sec.~3]{Hain}, the rank of a normal function is
defined by
\[
    \operatorname{rk}(\nu)
    :=
    \max_{y\in Y}\operatorname{rk}_y(\nu).
\]
This is an intrinsic notion and is independent of the choice of a
local flat trivialization. In particular, $\operatorname{rk}(\nu)=0$ if and only if \(\nu\) is locally constant.
\end{remark}

\begin{definition}\label{def:locally-constant-direction}
Let \(y\in Y\) and \(\xi\in T_yY\). We say that \(\nu\) is
\emph{locally constant to first order along \(\xi\)} if $d_y\nu(\xi)\in\mathscr L_{\nu(y)}.$
\end{definition}

By construction, $d_y\nu(\xi)\in\mathscr L_{\nu(y)}$ if and only if $\operatorname{pr}_{\nu(y)}
    \bigl(d_y\nu(\xi)\bigr)=0.$
Therefore we have,
\begin{equation}\label{eq:xi in N}
    \xi\in\mathcal N_y^\nu
    \quad\Longleftrightarrow\quad
    \nu\text{ is locally constant to first order along }\xi.
\end{equation}

\begin{remark}\label{rem:loc-constant-delta}
If \(\nu\) is locally constant to first order along
\(\xi\in T_yY\), then its Griffiths infinitesimal invariant vanishes in
that direction. More precisely, for every
\(\omega\in H^0(C_y,\omega_{C_y})\) such that $\kappa_y(\xi)\cdot\omega=0,$
we have
\[
    \delta\nu(y)(\omega\otimes\xi)=0.
\]
In particular, if \(\nu\) is locally constant, then $\delta\nu=0.$
\end{remark}

\section{Remarks on deformations of curves}\label{sec remarks on deformations}

Since we are interested in normal functions defined on families of \emph{marked curves}, it is important to clarify how the Griffiths infinitesimal invariant interacts with deformations of the marked points.

Let \(C\) be a smooth curve of genus \(g\geq 2\), with distinct marked
points \(p_1,\ldots,p_n\), and set $E:=p_1+\cdots+p_n.$ We denote by \(\mathcal{M}_g\) and \(\mathcal{M}_{g,n}\), respectively, the complex-analytic
Deligne--Mumford stacks of smooth curves and smooth \(n\)-pointed curves
of genus \(g\). By abuse of notation, we use the same symbols for the
algebraic moduli stacks and their analytifications.

\subsection{Deformations of \(\mathcal{M}_{g,n}\)}

First-order deformations of the pointed curve
\((C;p_1,\ldots,p_n)\) are parametrized by
\[
    T_{[C;p_1,\ldots,p_n]}\mathcal M_{g,n}
    \simeq H^1(C,T_C(-E)).
\]
Let $q:\mathcal M_{g,n}\longrightarrow\mathcal M_g$
be the forgetful morphism. Its differential at
\([C;p_1,\ldots,p_n]\) is identified with the natural map
\begin{equation}\label{varphi}
    dq:
    H^1(C,T_C(-E))
    \longrightarrow
    H^1(C,T_C)
\end{equation}
induced by the inclusion $T_C(-E)\hookrightarrow T_C.$

The exact sequence
\[
    0\longrightarrow T_C(-E)
    \longrightarrow T_C
    \longrightarrow T_C|_E
    \longrightarrow0
\]
and the vanishing \(H^0(C,T_C)=0\), valid since \(g\geq2\), give
\begin{equation}\label{Exact_seq_def}
    0\longrightarrow
    \bigoplus_{i=1}^n T_{p_i}C
    \longrightarrow
    H^1(C,T_C(-E))
    \xrightarrow{\,dq\,}
    H^1(C,T_C)
    \longrightarrow0.
\end{equation}
Thus \(\ker(dq)\) consists precisely of the infinitesimal deformations
for which the complex structure of \(C\) remains fixed and only the
marked points move.

Since the modular map \(\mu\) is étale, its
differential identifies $T_yY
    \simeq
    H^1(C,T_C(-E)).$
Let
\[
    \kappa_y:T_yY\longrightarrow H^1(C,T_C)
\]
be the Kodaira--Spencer map of the underlying family of curves
\(\pi:\mathcal C\to Y\), obtained by forgetting the marked sections.
Since the classifying map of the underlying family is \(q\circ\mu\),
we have
\[
    \kappa_y=dq\circ d\mu_y.
\]
Hence, under the above identification
\(T_yY\simeq H^1(C,T_C(-E))\), we shall simply write
\begin{equation}\label{KS-forgetful}
    \kappa_y(\xi)=dq(\xi).
\end{equation}

In particular, the infinitesimal variation of the Hodge structure of
\(C\) in the direction
\(\xi\in T_yY\simeq H^1(C,T_C(-E))\) depends only on the deformation of
the underlying curve, namely on \(dq(\xi)\). Thus the cup product
appearing in \eqref{Gauss_Manin} is
\[
    \kappa_y(\xi)\cdot\omega
    =
    dq(\xi)\cdot\omega
    \in H^1(C,\mathcal O_C).
\]

\subsection{The infinitesimal invariant on marked deformations}

We now record the contribution of the marked points to the infinitesimal
invariant. By the exact sequence \eqref{Exact_seq_def}, a vector
\[
    v=(v_1,\ldots,v_n)\in\bigoplus_{i=1}^nT_{p_i}C
\]
represents an infinitesimal deformation in which the complex structure
of \(C\) remains fixed and only the marked points move. In particular,
for every \(\omega\in H^0(C,\omega_C)\),
\[
    dq(v)\cdot\omega=0.
\]
Then the Griffiths infinitesimal can be evaluated on  \(v \otimes\omega\). More precisely, let $\mathcal D=\sum_{i=1}^n a_i p_i$ with $\sum_{i=1}^n a_i=0$
be a relative divisor of degree zero, and let \(\nu_{\mathcal D}\) be
the associated normal function. A direct local computation of the
corresponding integral \eqref{abel-integrals} gives
\begin{equation}\label{formula-vertical-inf-invariant}
    \delta\nu_{\mathcal D}(y)(\omega\otimes v)
    =
    \sum_{i=1}^n
    a_i\,\omega(p_i)(v_i).
\end{equation}
Here \(\omega(p_i)(v_i)\) denotes the natural pairing between
\(\omega(p_i)\in T_{p_i}^*C\) and \(v_i\in T_{p_i}C\). Equivalently,
if \(z\) is a local coordinate centered at \(p_i\), setting $\omega=f_i(z)\,dz$ and $v_i=\dot z_i\frac{\partial}{\partial z}$,
then
\[
    \omega(p_i)(v_i)=f_i(0)\dot z_i.
\]

In particular, for the normal function
\[
    \nu=(2g-2)\psi_{\mathbf m}-d\kappa
\]
considered in this paper, the canonical section \(\kappa\) is constant
along deformations in \(\ker(dq)\). Hence
\begin{equation}\label{formula-vertical-main-normal-function}
    \delta\nu(y)(\omega\otimes v)
    =
    (2g-2)
    \sum_{i=1}^n
    m_i\,\omega(p_i)(v_i).
\end{equation}

\subsection{Schiffer Deformations}\label{subsec schiffer def}

Let \(C\) be a smooth complex curve of genus \(g\geq 2\), and let
\(\omega\in H^0(C,K_C)\) be a non-zero holomorphic \(1\)-form.
Multiplication by \(\omega\) gives an exact sequence
\[
    0\longrightarrow T_C
    \xrightarrow{\cdot\omega}
    \mathcal O_C
    \longrightarrow
    \mathcal O_Z
    \longrightarrow0,
\]
where \(Z\) denotes the zero scheme of \(\omega\). Since
\(H^0(C,T_C)=0\), the associated long exact sequence gives
\begin{equation}\label{cohomo-seq-cup-prod-omega}
    0\longrightarrow H^0(C,\mathcal O_C)
    \longrightarrow H^0(C,\mathcal O_Z)
    \xrightarrow{\partial_Z} H^1(C,T_C)
    \xrightarrow{\cdot\omega} H^1(C,\mathcal O_C)
    \longrightarrow0.
\end{equation}
In particular,
\[
    \operatorname{Im}(\partial_Z)
    =
    H^1(C,T_C)_\omega
    :=
    \ker\!\left(
        H^1(C,T_C)
        \xrightarrow{\cdot\omega}
        H^1(C,\mathcal O_C)
    \right),
\]
and hence
\[
    \dim H^1(C,T_C)_\omega=2g-3.
\]

Following \cite{Col-Fre-Pir}, we describe the connecting morphism
\(\partial_Z\) in the Dolbeault model. Let
\(\theta\in H^0(\mathcal O_Z)\), choose a smooth extension
\(\widetilde\theta\) of \(\theta\) to a neighbourhood of \(Z\), and let
\(\rho\) be a smooth bump function which is identically \(1\) near \(Z\).
Writing locally $\omega=f(z)\,dz,$
the expression
\[
    \frac{1}{\omega}
    :=
    \frac{1}{f(z)}\frac{\partial}{\partial z}
\]
defines a meromorphic vector field with poles along \(Z\). The class
\(\partial_Z(\theta)\) is represented in Dolbeault cohomology by
\[
    \partial_Z(\theta)
    =
    \left[
        \frac{\bar\partial(\rho\widetilde\theta)}{\omega}
    \right]_{\mathrm{Dolb}}.
\]

\begin{definition} 
If $\theta\in H^0(\cO_Z)$ is such that $\theta(q)= 1$ at a point $q\in Z$ and $0$ on the other points and \(\rho_q\) is a bump function supported in a small neighbourhood of \(q\) and identically
\(1\) near \(q\), we call
\begin{equation} \label{schiffer}
\eta_q= \left[\frac{\debar \rho}{\omega}\right]_\mathrm{Dolb} \in H^1(C,T_C)
\end{equation}
a Schiffer deformation to $q$.
\end{definition}

More generally, let \(q\in Z\) and set $m_q:=\operatorname{ord}_q(\omega).$
In a local coordinate \(z\) centered at \(q\), a section of
\(\mathcal O_Z\) supported at \(q\) is represented by
\[
    \theta(z)
    =
    a_0+a_1z+\cdots+a_{m_q-1}z^{m_q-1}.
\]
Therefore its image under \(\partial_Z\) is represented by
\begin{equation}\label{def_on_ker}
    \left[
        \frac{
            \bar\partial\rho_q\,
            (a_0+a_1z+\cdots+a_{m_q-1}z^{m_q-1})
        }{\omega}
    \right]_{\mathrm{Dolb}}.
\end{equation}
By exactness of \eqref{cohomo-seq-cup-prod-omega}, the image of the
constant sections
\[
    H^0(C,\mathcal O_C)\simeq\mathbb C
    \longrightarrow H^0(C,\mathcal O_Z)
\]
gives the trivial class in \(H^1(C,T_C)\).

\begin{remark}\label{rem:schiffer-pointed}
Let \(p_i\) be a marked for some
\(i\). Although the Schiffer deformation \(\eta_{p_i}\) is centered at
\(p_i\), its Dolbeault representative $\frac{\bar\partial\rho_{p_i}}{\omega}$
vanishes in a neighbourhood of \(p_i\), since \(\rho_{p_i}\) is identically
\(1\) there. Choosing the support of \(\rho_{p_i}\) sufficiently small, the
representative vanishes in a neighbourhood of \(E\) and may therefore
be regarded as a \(T_C(-E)\)-valued form. Hence it defines a class
\[
    \eta_{p_i}\in H^1(C,T_C(-E))
\]
whose image under the map \eqref{varphi}
is the usual Schiffer deformation \eqref{schiffer}. We shall use the
same notation \(\eta_{p_i}\) for both classes.
\end{remark}

\begin{remark}\label{rem:vertical-schiffer}
Let $p_i$ a marked point for some $i$. If we consider a tangent vector
\[
    v_i\in T_{p_i}C
    \subset H^1(C,T_C(-E))
\]
Its Dolbeault representative can be described explicitly. Choose a
local coordinate \(z\) centered at \(p_i\), write
\[
    v_i=a_i\frac{\partial}{\partial z}\Big|_{p_i},
\]
and let \(\rho_i\) be a smooth bump function supported in a sufficiently
small neighbourhood of \(p_i\) and identically equal to \(1\) near
\(p_i\). If
\[
    \widetilde v_i
    =
    a_i\frac{\partial}{\partial z}
\]
denotes the corresponding local extension, then the image of \(v_i\)
under the connecting homomorphism is represented by
\[
    \left[
        \bar\partial(\rho_i\widetilde v_i)
    \right]_{\mathrm{Dolb}}
    =
    \left[
        a_i\,\bar\partial\rho_i
        \frac{\partial}{\partial z}
    \right]_{\mathrm{Dolb}}
    \in H^1(C,T_C(-E)).
\]

It is nevertheless non-trivial in \(H^1(C,T_C(-E))\).

Thus these classes represent infinitesimal variation of the marked points
with the complex structure of \(C\) fixed. 

By a slight abuse of terminology, we shall refer to these localized
marked deformations as \emph{vertical Schiffer deformations}, in order
to distinguish them from the usual Schiffer deformations in
\(H^1(C,T_C)\).
\end{remark}

\section{Griffiths formula}\label{section griff formula}

Using the previous notations of Subsection \ref{subsec schiffer def} we give the Griffiths formula  for a decomposable tensors when the normal function is {\em supported on the zeroes of $\omega$} (see equation~(6.18) in \cite{infinitesimal}).

Let $\pi:\mathcal C\longrightarrow Y$ be a smooth family of complex projective curves of genus \(g\), and fix
\(y\in Y\). Set $C .= C_y$

\begin{theorem}\label{Griff_theorem}
Let \(\nu\) be a normal function which, locally near \(y\),
is represented by a relative divisor $\mathcal D=\sum_{i=1}^k a_i p_i$ with $\sum_{i=1}^k a_i=0,$
where the \(p_i:Y\to\mathcal C\) are local sections of \(\pi\). Thus,
\[
    \nu(y)=\mathcal O_C(D) \in \operatorname{Pic}^0(C),
    \qquad
    D=\sum_{i=1}^k a_i p_i(y).
\]

Let $\xi\in T_yY$ and $\omega\in H^0(C,\omega_C)$
such that $\kappa_y(\xi)\cdot\omega=0$ in $H^1(C,\mathcal O_C).$
Choose a smooth function \(h\in C^\infty(C)\) such that, in the Dolbeault model,
\[
    \bar\partial h=\kappa_y(\xi)\cdot\omega.
\]
Then
\begin{equation}\label{Griffiths formula completa}
    \delta\nu(y)(\omega\otimes\xi)
    =
    \sum_{i=1}^k
    a_i\,\omega(p_i)\bigl(p_i'(\xi)\bigr)
    +
    \sum_{i=1}^k a_i h(p_i),
\end{equation}
where
\[
    p_i'(\xi)\in T_{p_i}C
\]
denotes the infinitesimal variation of the section \(p_i\) in the
direction \(\xi\).

Let \(Z\) be the zero scheme of \(\omega\). If $\operatorname{Supp}(D)\subseteq Z,$
then \(\omega(p_i)=0\) for every \(i\), and hence
\begin{equation}\label{Griff_formula}
    \delta\nu(y)(\omega\otimes\xi)
    =
    \sum_{i=1}^k a_i h(p_i).
\end{equation}
\end{theorem}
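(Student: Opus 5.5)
We will prove the Griffiths formula directly from the integral description \eqref{abel-integrals} of $\delta\nu(y)$.

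\textbf{Choice of lifting.} Work on a small contractible neighbourhood $U$ of $y$ and fix a $C^\infty$ trivialization $\mathcal C|_U\simeq C\times U$. Since $\sum a_i=0$, the divisor $D_t$ bounds a $1$-chain $\Gamma_t$ depending smoothly on $t$. The lifting $\tilde\nu(t)\in H^1(C_t,\mathbb C)\simeq H^0(\omega_{C_t})^*$ is the functional
\[
\alpha\mapsto\int_{\Gamma_t}\alpha .
\]
By linearity in the $a_i$ we may treat each term $a_i p_i$ separately, using a fixed base point $p_0$ whose contributions cancel because $\sum a_i=0$.

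\textbf{Differentiating along $\xi$.} Take $\omega\in H^0(\omega_C)$ with $\kappa_y(\xi)\cdot\omega=0$. To differentiate we need a family $\omega_t$ extending $\omega$. Represent $\kappa_y(\xi)$ by a Beltrami form $\beta$, so that the first-order variation of $\omega$ in $H^1(C,\mathbb C)$ is $\omega'=[\beta\cdot\omega]\in H^{0,1}$ modulo $H^{1,0}$.

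The hypothesis gives $\beta\cdot\omega=\bar\partial h$, so the closed form $\omega-t(\beta\omega-\bar\partial h)$ is cohomologous to $\omega$ modulo $t^2$ up to the exact form $t\,dh$. Equivalently, the flat extension differs from a holomorphic extension by $t\,\bar\partial h$. Note that $\partial h$ can be absorbed into the holomorphic part.

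We then compute
\[
\frac{d}{dt}\Big|_{t=0}\int_{\Gamma_t}\omega_t .
\]
This splits into two pieces:
\begin{itemize}
\item the boundary variation $\sum a_i\,\omega(p_i)(p_i'(\xi))$, coming from moving the endpoints of $\Gamma_t$;
\item the variation of the form integrated over $\Gamma$, namely $\int_\Gamma dh=\sum a_i h(p_i)$ by Stokes.
\end{itemize}
Pairing via the dual description gives exactly \eqref{Griffiths formula completa}. Changing $h$ by a constant does not matter since $\sum a_i=0$, and changing it by a holomorphic function does not arise since $h$ is determined up to constants. This confirms that the formula is well defined.

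\textbf{Main obstacle.} The hardest step is bookkeeping the identification of $\nabla_\xi\tilde\nu$ paired with $\omega$ with the derivative of the Abel–Jacobi integral against a flat, rather than holomorphic, extension of $\omega$. This is the standard argument of \cite{infinitesimal}, eq.\ (6.18), and \cite[Lemma~7.7]{Voisin}, and we would follow it, checking signs against the purely vertical case \eqref{formula-vertical-inf-invariant}.

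\textbf{The special case.} Once \eqref{Griffiths formula completa} is established, the case $\operatorname{Supp}(D)\subseteq Z$ is immediate: $\omega(p_i)=0$ kills the first sum, leaving \eqref{Griff_formula}.
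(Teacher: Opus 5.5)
The paper gives no proof of this statement; it quotes it from Griffiths \cite[eq.~(6.18)]{infinitesimal}. Your outline (differentiate an Abel--Jacobi integral over a chain $\Gamma_t$ with $\partial\Gamma_t=D_t$, obtaining an endpoint term plus a Stokes term) is the standard computation behind that citation. However, the step you defer is the heart of the formula, and as written you misdescribe it in a way that matters.

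First, $H^1(C_t,\mathbb C)$ is not $H^0(\omega_{C_t})^*$. The functional $\alpha\mapsto\int_{\Gamma_t}\alpha$ only gives a point of $\mathcal H_t/\mathcal F_t\simeq\mathcal F_t^*$, where $\nabla$ does not act. You need an honest local section $\tilde\nu$ of $\mathcal H$ with $\int_{C_t}\tilde\nu(t)\wedge\alpha=\int_{\Gamma_t}\alpha$ for all $\alpha\in H^0(\omega_{C_t})$. Then, for any holomorphic extension $\omega_t$ of $\omega$, compatibility of $\nabla$ with the cup product gives
\[
\xi\Bigl(\int_{\Gamma_t}\omega_t\Bigr)=\int_C\nabla_\xi\tilde\nu\wedge\omega+\int_C\tilde\nu\wedge\nabla_\xi\omega_t .
\]
Second, the hypothesis $\kappa_y(\xi)\cdot\omega=0$ is what lets you kill the last term. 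It is not a matter of using a ``flat rather than holomorphic'' extension. Integrating a flat extension (the constant form $\omega$ in your trivialization) over $\Gamma_t$ produces only the endpoint term and loses $\sum_i a_ih(p_i)$. Also, your form $\omega-t(\beta\omega-\bar\partial h)$ is identically $\omega$, and $\partial h$ is not a holomorphic form.

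The correct mechanism is as follows. Let $\beta$ be the Beltrami form induced by your $C^\infty$ trivialization. Pulled back to $C$, the first-order variation of any holomorphic extension is a closed form whose $(0,1)$-part is $\beta\cdot\omega$; this is the representative of $\kappa_y(\xi)\cdot\omega$ in which you solve $\bar\partial h=\beta\cdot\omega$. Its $(1,0)$-part is determined only up to $H^0(C,\omega_C)$, and it can be taken to be $\partial h$, since $d(\partial h+\bar\partial h)=0$. So some holomorphic extension satisfies $\omega_t=\omega+t\,dh+O(t^2)$. It is holomorphic \emph{and} cohomologically flat to first order, hence $\nabla_\xi\omega_t=0$ at $y$. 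Then $\xi\int_{\Gamma_t}\omega_t$ is exactly your two terms, the second being $\int_\Gamma dh=\sum_ia_ih(p_i)$.

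Finally, $\beta$ (hence $h$) and $p_i'(\xi)$ must come from the same trivialization. Changing the trivialization by a vector field $V$ shifts the two sums by opposite amounts $\pm\sum_ia_i\,\omega(p_i)(V(p_i))$, so only their total is intrinsic. With these corrections your argument proves the formula, and the case $\operatorname{Supp}(D)\subseteq Z$ follows as you say.
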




\begin{remark} The  condition that the support of $D$ is contained in the set of zeros of $\omega$ allows to eliminate the  derivative that appears from the deformation of the divisor.
\end{remark}

For later purposes we want also remark the following.

\begin{remark}
The quantity $\sum_{i=1}^k a_i h(p_i)$
is independent of the choice of the solution \(h\) of $\bar\partial h=\kappa_y(\xi)\cdot\omega,$

since two such solutions differ by a constant and
\(\sum_i a_i=0\).
If, moreover, $\operatorname{Supp}(D)\subseteq  Z,$
then this quantity coincides with
\(\delta\nu(y)(\omega\otimes\xi)\). Consequently, it is also
independent of the choice of a linearly equivalent degree-zero
representative of \(\nu(y)\).
\end{remark}

\section{Strata of Abelian differentials}\label{sec abelian strata}
In this section we recall the basic geometry of strata of Abelian
differentials and explain their relation with the normal function
\(\nu\) introduced in \eqref{eq:normal-function}.

\subsection{Projectivized strata of Abelian differentials}

Following the approach introduced by Kontsevich~\cite{Kontsevich}, we describe the stratification of the moduli space of holomorphic $1$-forms according to the multiplicities of their zeros. 

For any integer $g \geq 2$, define the space $\mathcal{H}_g$ as the moduli stack of pairs $(C,\omega)$, where $C$ is a smooth compact complex curve of genus $g$, and $\omega$ is a holomorphic $1$-form on $C$ (an Abelian differential) which is not identically zero. Equivalently, $\mathcal{H}_g$ can be described as the total space of the Hodge bundle over $\mathcal{M}_g$ with the zero section removed.


The space $\mathcal{H}_g$ is therefore a smooth Deligne--Mumford stack of complex dimension
\[
\dim_{\mathbb{C}} \mathcal{H}_g = 4g-3.
\]
There is a natural projection
\begin{equation}\label{proj H_g to M_g}
\mathcal{H}_g \longrightarrow \mathcal{M}_g, \qquad (C,\omega) \longmapsto [C],
\end{equation}
whose fibre over a point $[C]\in\mathcal{M}_g$ is the punctured vector space $H^0(C,\omega_C) \setminus \{0\}$.

\medskip

The stack $\mathcal{H}_g$ admits a natural stratification according to the multiplicities of the zeros of $\omega$. Let $(m_1, \dots, m_n)$ be an ordered \(n\)-tuple of positive integers such that $m_1 \leq \dots \leq m_n$ and satisfying
\[
\sum_{i=1}^{n} m_i = 2g-2.
\]
We define $\mathcal{H}(m_1, \dots, m_n)$ to be the locus in $\mathcal{H}_g$ consisting of pairs $(C,\omega)$ such that $\omega$ has precisely $n$ zeros with multiplicities $m_1, \dots, m_n$. The notation is symmetric with respect to permutations of the indices: for any permutation $\sigma \in S_n$ we have
\[
\mathcal{H}(m_1, \dots, m_n) = \mathcal{H}(m_{\sigma(1)}, \dots, m_{\sigma(n)}).
\]

\medskip

The dimension of these strata is given by the following classical result (see \cite{Polishchuk}, \cite{VM_quadratic_diff}).
\begin{theorem}\label{Dim_stratum}
The stratum $\mathcal{H}(m_1,\dots,m_n)$ is a smooth
Deligne--Mumford stack of complex dimension
\[
\dim_{\mathbb{C}}\mathcal{H}(m_1,\dots,m_n)
=
2g+n-1.
\]
\end{theorem}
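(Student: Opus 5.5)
The plan is to prove Theorem~\ref{Dim_stratum} with \emph{period coordinates}, identifying the deformation theory of a pair $(C,\omega)\in\mathcal H(m_1,\dots,m_n)$ with relative de Rham cohomology. Let $Z=\{p_1,\dots,p_n\}$ be the (reduced) zero set of $\omega$, with $\operatorname{ord}_{p_i}\omega=m_i$.

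First I compute the expected dimension. The long exact sequence of the pair $(C,Z)$ reads
\[
0\to H^0(C,\mathbb C)\to H^0(Z,\mathbb C)\to H^1(C,Z;\mathbb C)\to H^1(C,\mathbb C)\to 0 .
\]
It gives $\dim H^1(C,Z;\mathbb C)=(n-1)+2g=2g+n-1$, and also $H^2(C,Z;\mathbb C)\simeq H^2(C,\mathbb C)$. Locally on the stratum the zeros can be followed continuously, so the groups $H_1(C,Z;\mathbb Z)$ form a local system. The period map
\[
(C',\omega')\longmapsto\Bigl(\gamma\mapsto\textstyle\int_\gamma\omega'\Bigr)\in H^1(C,Z;\mathbb C)
\]
is then a well-defined holomorphic map from a neighbourhood of $(C,\omega)$ in the stratum.

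Next I describe first-order deformations of $(C,\omega)$ that preserve the multiplicities. They are governed by the hypercohomology of a two-term complex
\[
\mathcal K^\bullet:\quad T_C\bigl(-\textstyle\sum_i p_i\bigr)\xrightarrow{\;v\mapsto L_v\omega=d(\iota_v\omega)\;}\omega_C .
\]
The intended reading is: Čech $1$-cochains of vector fields fixing the zeros glue the curve, $0$-cochains of $1$-forms deform $\omega$, and the compatibility condition is the Lie derivative.

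The key step is to show that $\mathcal K^\bullet$ is quasi-isomorphic to $j_!\mathbb C_{C\setminus Z}$, the constant sheaf vanishing at $Z$. For this I use the isomorphism $T_C\simeq\mathcal O_C(-\sum m_ip_i)$ given by $v\mapsto\iota_v\omega$. Under it, $T_C(-\sum p_i)$ becomes the sheaf of functions vanishing to order $m_i+1$ at $p_i$, and the differential becomes $f\mapsto df$. This is where care is needed. Away from $Z$, the holomorphic Poincaré lemma gives exactness with kernel $\mathbb C$. Near $p_i$, the kernel is $0$, since a constant vanishing at $p_i$ is zero. However, $d$ only hits forms vanishing to order $m_i$, so the naive cokernel is a skyscraper of length $m_i$.

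I expect this local computation at the zeros to be the main obstacle. The fix I will try first is to enlarge the first term so that the zeros may move: replace $T_C(-\sum p_i)$ by the sheaf of vector fields $v$ with $\iota_v\omega$ vanishing at $p_i$, and record the allowed jets. Equivalently, I can pass directly to the local normal form $\omega=z^{m_i}dz$ and check that the only deformations of this germ are the $(m_i+1)$-th root reparametrisations. This is the classical local argument of Veech and Masur (cf.\ \cite{Polishchuk}, \cite{VM_quadratic_diff}).

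Once the quasi-isomorphism is established, the tangent space is $\mathbb H^1(\mathcal K^\bullet)\simeq H^1(C,Z;\mathbb C)$, which is the differential of the period map, so the period map is a local immersion. The obstructions lie in $\mathbb H^2(\mathcal K^\bullet)\simeq H^2(C,Z;\mathbb C)$. To show they vanish, I will note that the obstruction is compatible with scaling $\omega$ and with the map to $H^2(C,\mathbb C)$, where it is the class of the (trivially extended) deformation. Alternatively, and more directly, I will observe that the period map is both injective on tangent spaces and open onto a smooth target of the same dimension, via local surgery constructions (\emph{breaking up zeros / adding relative periods}). Together these show that the period map is a local biholomorphism onto an open subset of $H^1(C,Z;\mathbb C)$.

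It follows that $\mathcal H(m_1,\dots,m_n)$ is smooth of dimension $2g+n-1$. Finally, the automorphisms of $(C,\omega)$ are finite, since $g\ge2$, so the stratum is a Deligne--Mumford stack.
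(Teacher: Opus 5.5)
The paper gives no proof of this statement; it quotes it as classical from \cite{Polishchuk} and \cite{VM_quadratic_diff}. Your period-coordinate strategy is the standard one, and the count $\dim H^1(C,Z;\mathbb C)=2g+n-1$ is right. However, the central step fails as written, and unobstructedness is not actually proved.

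\textbf{The deformation complex is wrong.} $\mathcal K^\bullet=[T_C(-\sum_ip_i)\to\omega_C]$ is not the deformation complex of the stratum. It places no constraint on how $\omega$ deforms, so it governs deformations of $(C,p_1,\dots,p_n,\omega)$ in the Hodge bundle over $\mathcal M_{g,n}$. Its $\mathbb H^1$ has dimension $4g-3+n$, and its degree-one cohomology sheaf is the skyscraper $\bigoplus_i\omega_C/\omega_C(-m_ip_i)$ you found.

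Your repair modifies the wrong term. Every $v\in T_C$ already has $\iota_v\omega$ vanishing to order $m_i$ at $p_i$. So enlarging the source just gives $[T_C\to\omega_C]$, the deformation complex of $\mathcal H_g$ itself, and its cokernel at $p_i$ still has length $m_i-1$. The excess lies in the target: those jets are exactly the $1$-form perturbations that split the zero of order $m_i$, i.e.\ that leave the stratum.

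The correct complex is $\mathcal K^\bullet_{\mathrm{str}}=[T_C(-\sum_ip_i)\xrightarrow{L_{\cdot}\omega}\omega_C(-\sum_im_ip_i)]$. Via $v\mapsto\iota_v\omega$ it becomes $[\mathcal O_C(-\sum_i(m_i+1)p_i)\xrightarrow{d}\omega_C(-\sum_im_ip_i)]$. This is locally surjective with kernel $j_!\mathbb C_{C\setminus Z}$, so $\mathbb H^1(\mathcal K^\bullet_{\mathrm{str}})\simeq H^1(C,Z;\mathbb C)$ follows at once, with no normal form needed.

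\textbf{Obstructions are not handled.} Even with the correct complex, $\mathbb H^2(\mathcal K^\bullet_{\mathrm{str}})\simeq H^2(C,Z;\mathbb C)\simeq\mathbb C\neq0$, so the tangent computation alone does not give smoothness. Your remark about compatibility with scaling does not show that the obstruction class vanishes. The ``surgery'' alternative assumes the very openness of the period map that the theorem asserts.

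One clean way to close this is a dimension count. The stratum with labelled zeros is locally cut out of the total space of the Hodge bundle over $\mathcal M_{g,n}$, which is smooth of dimension $4g-3+n$. It is cut out by $\sum_im_i=2g-2$ equations, saying that the $(m_i-1)$-jet of $\omega$ at $p_i$ vanishes. Every component therefore has dimension at least $2g+n-1$. Together with $\dim\mathbb H^1=2g+n-1$, this forces smoothness of that dimension.

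Equivalently, $\mathbb H^2(\mathcal K^\bullet_{\mathrm{str}})\to\mathbb H^2(\mathcal K^\bullet)$ is surjective, because the quotient complex is a skyscraper in degree one, and both sides are $\mathbb C$. So obstructions for the stratum inject into those of the smooth Hodge bundle, and hence vanish.

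Finally, forgetting the labelling of the zeros is \'etale, which gives the statement for $\mathcal H(m_1,\dots,m_n)$.
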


\medskip

The natural action of $\mathbb{C}^{*}$ on
$\mathcal{H}(m_1,\dots,m_n)$, given by rescaling the differential,
defines the projectivized stratum
\[
\mathbb{P}\mathcal{H}(m_1,\dots,m_n)
:=
\mathcal{H}(m_1,\dots,m_n)/\mathbb{C}^{*}.
\]
Its points are pairs $(C,[\omega])$, where the zero divisor of $\omega$
has multiplicities $m_1,\dots,m_n$, considered without ordering.
Moreover, we have:
\begin{equation}\label{dimension proj H}
\dim_{\mathbb{C}} \mathbb{P}\mathcal{H}(m_1, \dots, m_n) = \dim_{\mathbb{C}} \mathcal{H}(m_1, \dots, m_n) - 1 = 2g + n - 2.
\end{equation}

Since the marked points in $\mathcal{M}_{g,n}$ are labelled, we replace $\mathbb{P}\mathcal{H}(m_1,\ldots,m_n)$ by the canonical finite étale cover $\mathcal{P}(m_1,\ldots,m_n)$
parametrizing tuples $(C,p_1,\ldots,p_n,[\omega])$ such that the points $p_i$ are pairwise distinct and 
$\operatorname{div}(\omega) = \sum_{i=1}^n m_i p_i$. This replacement is relevant when some of the multiplicities $m_i$ coincide. 
There is a commutative diagram 
\[ 
\begin{array}{ccc} \mathcal{P}(m_1,\ldots,m_n) & \longrightarrow & \mathcal{M}_{g,n} \\[4pt] \downarrow && \downarrow \\[4pt] \mathbb{P}\mathcal{H}(m_1,\ldots,m_n) & \longrightarrow & \mathcal{M}_g, \end{array} 
\] where the vertical morphisms forget, respectively, the labelling of the zeros and the marked points.
The image of the morphism
\[
\mathcal P(m_1,\ldots,m_n)
\longrightarrow
\mathcal M_{g,n},
\qquad
(C,p_1,\ldots,p_n,[\omega])
\longmapsto
(C,p_1,\ldots,p_n).
\]
is the locus of pointed curves satisfying $\omega_C
\simeq
\mathcal O_C\left(\sum_{i=1}^n m_i p_i\right).$
Moreover, the projective differential $[\omega]$ is uniquely determined
by the pointed curve. Indeed, two nonzero holomorphic differentials
having the same zero divisor differ by a nonzero constant. Therefore,
the morphism identifies
$\mathcal P(m_1,\ldots,m_n)$ with its image in
$\mathcal M_{g,n}$ as a substack.

In particular, \[ \operatorname{codim}_{\mathcal{M}_{g,n}} \mathcal{P}(m_1,\ldots,m_n) = g-1. \]



\subsection{The Abelian stratum and the null space}
\label{sec:abelian-rank-zero}

We now relate the strata introduced above to the normal function
\(\nu\) defined in \eqref{eq:normal-function}. Let \(y\in Y\) and set $C:=C_y, p_i:=p_i(y)$ and $D:=D_y=\sum_{i=1}^n m_i p_i.$

Assume \(d\neq 0\) and set
\begin{equation}\label{a_i}
    a_i:=\frac{(2g-2)m_i}{d},
    \qquad i=1,\ldots,n.
\end{equation}
Suppose that \(a_1,\ldots,a_n\) are positive integers and that
\[
    [C;p_1,\ldots,p_n]
    \in
    \mathcal P(a_1,\ldots,a_n).
\]
By definition of the stratum, $\omega_C
    \simeq
    \mathcal O_C\left(\sum_{i=1}^n a_i p_i\right).$
Hence
\[
    \nu(y)
    =
    \mathcal O_C\bigl((2g-2)D\bigr)
    \otimes\omega_C^{-d}
    \simeq
    \mathcal O_C.
\]

Thus \(\nu\) vanishes identically along the inverse image in \(Y\) of
\(\mathcal P(a_1,\ldots,a_n)\). Consequently, its differential vanishes
on the tangent directions to the stratum. Using the étale identification
of tangent spaces given by the modular map, we obtain
\[
    T_{[C;p_1,\ldots,p_n]}
    \mathcal P(a_1,\ldots,a_n)
    \subseteq
    \mathcal N_y^\nu .
\]
Since
\[
    \operatorname{codim}_{\mathcal M_{g,n}}
    \mathcal P(a_1,\ldots,a_n)
    =
    g-1,
\]
we conclude that
\begin{equation}\label{disug g-1}
    \operatorname{codim}_{T_{[C;p_1,\ldots,p_n]}\mathcal M_{g,n}}
    \mathcal N_y^\nu
    \leq g-1.
\end{equation}

Therefore the Abelian stratum provides a geometric locus where the
maximal possible codimension \(g\) of \(\mathcal N_y^\nu\) cannot occur.

\section{Proof of the Main Theorem}\label{sec main theorem}

We now prove Theorem~\ref{thm:max-rank-nu}. The main ingredient is the
construction of suitable bases of \(H^0(C,\omega_C)\), adapted to the
position of the marked divisor. 
The purpose of the construction is to choose an ordered basis $\{\omega_1,\ldots,\omega_g\}$
together with suitable Schiffer deformations
\[
    \eta_1,\ldots,\eta_g
    \in H^1(C,T_C(-E))
\]
for which the Griffiths infinitesimal invariant has a triangular
behaviour. More precisely, the ordering will allow us to test a linear
combination
\[
    \eta=\sum_{i=1}^g\alpha_i\eta_i
\]
successively against
\(\omega_g,\omega_{g-1},\ldots,\omega_1\), and to prove by reverse
induction that all the coefficients \(\alpha_i\) vanish.

Let $\pi\colon (\mathcal{C},p_1,\ldots,p_n)\longrightarrow Y$ as in \eqref{definition of family}. Fix \(y\in Y\) and, for simplicity of notation, set $C:=C_y, p_i = p_i(y)$ and $D:=D_y=\sum_{i=1}^n m_i p_i$ for a fixed $n$-tuple of non-zero integers $\mathbf{m} = (m_1,\dots,m_n) \in \mathbb{Z}^n$ such that $m_1\leq\dots\leq m_n$ with $\sum_i m_i = d$.  By hypothesis,
\[
\operatorname{Supp}(D)=\{p_1,\ldots,p_n\}.
\]
Let $\psi_\mathbf{m} \in \operatorname{Pic}^d(\pi)$ as in \eqref{def-psi_m}. Then, let $\nu$ be the associated normal function \eqref{eq:normal-function}. 

Recall that
\[
    \nu(y)
    =
    \mathcal O_C\bigl((2g-2)D\bigr)
    \otimes\omega_C^{-d}.
\]

Since the rank of a normal function is invariant under multiplication
by a non-zero integer, we may assume that
\[
    \gcd(m_1,\ldots,m_n)=1.
\]
Moreover, replacing \(\mathbf m\) by \(-\mathbf m\), if necessary, replaces \(\nu\) by \(-\nu\) and therefore leaves \(\mathcal N_y^\nu\) unchanged. Hence we may assume $d\geq0.$
As before, if $d\neq 0$ set $a_i:=\frac{(2g-2)m_i}{d}$, for $i=1,\ldots,n.$
Set $E:=p_1+\cdots+p_n$
and let
\[
    r:=
    \operatorname{codim}_{H^0(C,\omega_C)}
    H^0(C,\omega_C(-E)).
\]
Thus
\[
    \dim H^0(C,\omega_C(-E))=g-r.
\]

\subsection{Construction of the basis of $H^0(C,\omega_C)$}\label{costruction basis}


Consider the evaluation map
\[
    \operatorname{ev}_E:
    H^0(C,\omega_C)
    \longrightarrow
    \bigoplus_{i=1}^n \omega_C|_{p_i},
    \qquad
    \omega\longmapsto
    \bigl(\omega(p_1),\ldots,\omega(p_n)\bigr).
\]
By definition, its rank is \(r\). After relabelling the marked points
if necessary, we may assume that evaluation at
\(p_1,\ldots,p_r\) gives \(r\) independent conditions. We can therefore
choose linearly independent forms
\[
    \omega_1,\ldots,\omega_r
    \in H^0(C,\omega_C)
\]
such that
\begin{equation}\label{eq:triangular-evaluation}
    \omega_i(p_j)=0
    \quad\text{for }j<i,
    \qquad
    \omega_i(p_i)\neq0,
    \quad i=1,\ldots,r.
\end{equation}
If
\[
    H^0(C,\omega_C(-E))=0,
\]
then \(r=g\), and the differentials $\omega_1,\ldots,\omega_g$
already form the required adapted basis of \(H^0(C,\omega_C)\).
No further choice is necessary. Notice that in this case necessarily \(n\geq g\), since the evaluation
map at the marked points has rank \(g\).
Assume now that
\[
    H^0(C,\omega_C(-E))\neq0,
\]
equivalently \(r<g\). It remains to choose $\omega_{r+1},\ldots,\omega_g$
forming a basis of \(H^0(C,\omega_C(-E))\). 
The choice of these remaining differentials will be given by the
coefficients appearing in the Griffiths formula. More precisely, for
a Schiffer deformation centered at \(p_j\), we shall need to ensure
that
\[
    (2g-2)m_j
    -
    d\,\operatorname{ord}_{p_j}(\omega_i)
    \neq0.
\]
Accordingly, we distinguish the following three cases.

\textbf{Case 1: \(D\) is not effective.}
Choose an index \(s\) such that \(m_s<0\). We choose a basis
\[
    \omega_{r+1},\ldots,\omega_g
\]
of \(H^0(C,\omega_C(-E))\), ordered by increasing vanishing order at
\(p_s\):
\[
    \operatorname{ord}_{p_s}(\omega_{r+1})
    <
    \cdots
    <
    \operatorname{ord}_{p_s}(\omega_g).
\]
Since \(d\geq0\), \(m_s<0\), and
\(\operatorname{ord}_{p_s}(\omega_i)>0\), one has
\[
    (2g-2)m_s
    -
    d\,\operatorname{ord}_{p_s}(\omega_i)
    \neq0.
\]
for every \(i=r+1,\ldots,g\).

\medskip
\noindent
\textbf{Case 2: \(D\) is effective and \(a_s\notin\mathbb Z\) for some \(s\).} Observe that \(d>0\), so the numbers
\(a_i\) are defined.
Choose such an index \(s\) and let
\[
    \omega_{r+1},\ldots,\omega_g
\]
be a basis of \(H^0(C,\omega_C(-E))\), ordered by increasing vanishing
order at \(p_s\).

For every \(i=r+1,\ldots,g\),
\[
    (2g-2)m_s
    -
    d\,\operatorname{ord}_{p_s}(\omega_i)
    \neq0,
\]
since equality would imply
\[
    a_s
    =
    \frac{(2g-2)m_s}{d}
    =
    \operatorname{ord}_{p_s}(\omega_i)
    \in\mathbb Z,
\]
contrary to the choice of \(s\).

\medskip
\noindent
\textbf{Case 3: \(D\) is effective and \(a_i\in\mathbb Z\) for every \(i\).}
Set
\[
    H_0:=H^0(C,\omega_C(-p_1-\cdots-p_n)).
\]
For \(j=1,\ldots,n-1\), define
\[
    H_j
    :=
    H^0\left(
        C,
        \omega_C
        \left(
            -\sum_{i=1}^j a_i p_i
            -\sum_{i=j+1}^n p_i
        \right)
    \right),
\]
and finally set
\[
    H_n
    :=
    H^0\left(
        C,
        \omega_C
        \left(
            -\sum_{i=1}^n a_i p_i
        \right)
    \right).
\]
We obtain a filtration $H_0\supseteq H_1\supseteq\cdots\supseteq H_n.$
For each \(j=1,\ldots,n\), set
\[
    Q_j:=H_{j-1}/H_j,
    \qquad
    d_j:=\dim Q_j.
\]
Choose a basis $\overline{\omega}_{j,1},
    \ldots,
    \overline{\omega}_{j,d_j}$
of \(Q_j\), adapted to the vanishing order at \(p_j\), and choose
lifts
\[
    \omega_{j,1},\ldots,\omega_{j,d_j}
    \in H_{j-1}.
\]
We may arrange them so that
\[
    \operatorname{ord}_{p_j}(\omega_{j,1})
    <
    \cdots
    <
    \operatorname{ord}_{p_j}(\omega_{j,d_j}).
\]
We order the pairs \((j,s)\) lexicographically, namely
\[
    (j,s)<(j',s')
\]
if either \(j<j'\), or \(j=j'\) and \(s<s'\).

Since \(\omega_{j,s}\in H_{j-1}\) but its class in
\(H_{j-1}/H_j\) is non-zero, one has
\begin{equation}\label{eq:case3-vanishing}
    1
    \leq
    \operatorname{ord}_{p_j}(\omega_{j,s})
    <
    a_j,
    \qquad
    s=1,\ldots,d_j.
\end{equation}
In particular,
\[
    (2g-2)m_j
    -
    d\,\operatorname{ord}_{p_j}(\omega_{j,s})
    \neq0.
\]
Finally, choose a basis \(\mathcal B_n\) of \(H_n\).
The union
\[
    \mathcal B_0
    :=
    \bigcup_{j=1}^n
    \{\omega_{j,1},\ldots,\omega_{j,d_j}\}
    \cup
    \mathcal B_n
\]
is a basis of \(H_0\).
Indeed, this follows successively by lifting bases of the quotients
\(H_{j-1}/H_j\) along the filtration.
Observe that,
since $\deg\left(\sum_{i=1}^n a_i p_i\right)=2g-2,$
one has \(\dim H_n\leq1\). Moreover,
\begin{equation}
    H_n\neq0 \Leftrightarrow \omega_C
    \simeq
    \mathcal O_C\left(\sum_{i=1}^n a_i p_i\right) \Leftrightarrow [C;p_1,\ldots,p_n]
    \in\mathcal P(a_1,\ldots,a_n).
\end{equation}

In this case \(H_n\) is generated by a differential
\(\omega_{\mathrm{ab}}\) satisfying
\[
    \operatorname{div}(\omega_{\mathrm{ab}})
    =
    \sum_{i=1}^n a_i p_i.
\]

\subsection{Associated Schiffer deformations}

We now associate infinitesimal deformations of the pointed curve to the
adapted bases constructed above. Recall that the first \(r\) elements of
the basis satisfy \eqref{eq:triangular-evaluation}.

For every \(i=1,\ldots,r\), since \(\omega_i(p_i)\neq0\), choose the
unique vector $v_i\in T_{p_i}C$
such that $\omega_i(p_i)(v_i)=1.$
Through the natural inclusion
\[
    T_{p_i}C
    \hookrightarrow
    \bigoplus_{j=1}^n T_{p_j}C
    \hookrightarrow
    H^1(C,T_C(-E)),
\]
we regard \(v_i\) also as an infinitesimal deformation of the pointed
curve, and use the same notation for its image.
As explained in Remark~\ref{rem:vertical-schiffer}, the class \(v_i\)
represents the infinitesimal variation of the marked point \(p_i\), with
the complex structure of \(C\) fixed, and we refer to it as the
vertical Schiffer deformation associated with \(\omega_i\). In
particular,
\[
    dq(v_i)=0
    \qquad\text{in } H^1(C,T_C).
\]

Equivalently, if \(\rho_i\) is a smooth bump function centered at
\(p_i\), with support contained in a neighbourhood on which
\(\omega_i\) does not vanish, then \(v_i\) is represented in the
Dolbeault model by $v_i
    =
    \left[
        \frac{\bar\partial\rho_i}{\omega_i}
    \right].$

For the remaining elements of the adapted basis we consider the
corresponding higher Schiffer deformations. 
In Case~1, let \(p_s\) be the marked point with \(m_s<0\) chosen in
the construction of the basis. For \(i=r+1,\ldots,g\), let
\(\rho_s\) be a smooth bump function centered at \(p_s\) and set
\[
    \eta_i
    :=
    \left[
        \frac{\bar\partial\rho_s}{\omega_i}
    \right]
    \in H^1(C,T_C(-E)).
\]
In Case~2, let \(p_s\) be the marked point for which
\(a_s\notin\mathbb Z\). For \(i=r+1,\ldots,g\), define
\[
    \eta_i
    :=
    \left[
        \frac{\bar\partial\rho_s}{\omega_i}
    \right]
    \in H^1(C,T_C(-E)),
\]
where \(\rho_s\) is a smooth bump function centered at \(p_s\).
In Case~3, for every lifted differential
\[
    \omega_{j,s},
    \qquad
    s=1,\ldots,d_j,
\]
choose a smooth bump function \(\rho_j\) centered at \(p_j\) and define
\[
    \eta_{j,s}
    :=
    \left[
        \frac{\bar\partial\rho_j}{\omega_{j,s}}
    \right]
    \in H^1(C,T_C(-E)).
\]
The ordering of the adapted bases is chosen precisely so that these
deformations have a triangular behaviour with respect to the
corresponding holomorphic differentials.

\subsection{Proof of the Main Theorem}

\begin{proof}
Fix \(y\in Y\) and set
\[
    C:=C_y,
    \qquad
    p_i:=p_i(y),
    \qquad
    D:=\sum_{i=1}^n m_i p_i,
    \qquad
    E:=p_1+\cdots+p_n.
\]
Via the étale modular map, we identify $T_yY
    \simeq
    H^1(C,T_C(-E))$. We first prove part~\((2)\) of the theorem. Thus we assume that
\([C;p_1,\ldots,p_n]\) does not satisfy the conditions appearing in
part~\((1)\).

Let
\[
    r
    =
    \operatorname{codim}_{H^0(C,\omega_C)}
    H^0(C,\omega_C(-E)).
\]
Recall that the first \(r\) elements of the adapted basis satisfy
\begin{equation}\label{eq:proof-triangular-vertical}
    \omega_j(p_i)=0
    \quad\text{for }i<j,
    \qquad
    \omega_j(p_j)\neq0,
    \quad j=1,\ldots,r.
\end{equation}
Let $v_1,\ldots,v_r
    \in H^1(C,T_C(-E))$
be the corresponding vertical Schiffer deformations, normalized so that $\omega_i(p_i)(v_i)=1.$

If \(r<g\), let moreover
\[
    \{\eta_\lambda\}_{\lambda\in\Lambda}
\]
denote the higher Schiffer deformations constructed from the remaining
elements of the adapted basis. Here, in Cases~1 and~2, the index set
\(\Lambda\) is simply the ordered set of the remaining differentials,
whereas in Case~3 we use the indices
\[
    \lambda=(j,s),
    \qquad
    \omega_\lambda=\omega_{j,s},
    \qquad
    \eta_\lambda=\eta_{j,s},
\]
ordered as in the construction of the basis. In all cases the
construction gives $|\Lambda|=g-r.$
If \(r=g\), we simply take \(\Lambda=\varnothing\).

We now prove the following claim:
\begin{equation}\label{eq:main-claim-proof}
\begin{split}
    &\xi
    =
    \sum_{i=1}^r\alpha_i v_i
    +
    \sum_{\lambda\in\Lambda}\beta_\lambda\eta_\lambda
    \in \mathcal N_y^\nu \Longrightarrow \,\alpha_i=0, \,
    \beta_\lambda=0 \quad\forall \, i,\lambda
\end{split}
\end{equation}



We first show that all the coefficients \(\beta_\lambda\) vanish. Assume that some \(\beta_\lambda\neq0\), and let
\(\lambda_0\) be maximal, with respect to the ordering fixed above,
among the indices such that
\[
    \beta_{\lambda_0}\neq0.
\]
Write
\[
    \omega:=\omega_{\lambda_0}, \quad p:=p_{\lambda_0},
\]
for the corresponding holomorphic differential and the center of the
Schiffer deformation.

By construction of the adapted basis, the higher Schiffer
deformations satisfy the triangular property
\[
    dq(\eta_\lambda)\cdot\omega=0
    \qquad
    \text{for every }\lambda\leq\lambda_0.
\]
Moreover, $dq(v_i)=0$ for $i=1,\ldots,r.$
Since the coefficients corresponding to indices
\(\lambda>\lambda_0\) vanish by the choice of \(\lambda_0\), it follows
that
\[
    dq(\xi)\cdot\omega=0.
\]
Hence \(\delta\nu(y)\) can be evaluated on
\(\omega\otimes\xi\). Since \(\xi\in \mathcal N_y^\nu\), by \eqref{eq:xi in N} and Remark \ref{rem:loc-constant-delta} we have
\begin{equation}\label{eq:delta-zero-proof}
    \delta\nu(y)(\omega\otimes\xi)=0.
\end{equation}

Set $Z_\omega:=\operatorname{div}(\omega).$
We represent the value of the normal function by
\[
    \nu(y)
    =
    \mathcal O_C\bigl((2g-2)D-dZ_\omega\bigr).
\]
Since every differential involved in the higher Schiffer construction
belongs to \(H^0(C,\omega_C(-E))\), all the marked points are zeros of
\(\omega\). Thus the first term in the Griffiths formula vanishes.

For each \(\lambda\leq\lambda_0\), the quotient $\frac{\omega}{\omega_\lambda}$
is holomorphic in a neighbourhood of the center of \(\eta_\lambda\).
Consequently, a solution of the Dolbeault equation entering the
Griffiths formula (cf. Theorem \ref{Griff_theorem}) is given locally by
\[
    h
    =
    \sum_{\lambda\leq\lambda_0}
    \beta_\lambda
    \rho_\lambda
    \frac{\omega}{\omega_\lambda}.
\]
The ordering of the vanishing orders implies that all the preceding
terms vanish at the center \(p\), while the term corresponding to
\(\lambda_0\) equals \(\beta_{\lambda_0}\). Hence
\[
    h(p)=\beta_{\lambda_0},
\]
and \(h\) vanishes at the other points of the support relevant to the
Griffiths formula. Therefore \eqref{eq:delta-zero-proof} gives
\begin{equation}\label{eq:diagonal-coefficient}
    0
    =
    \beta_{\lambda_0}
    \left(
        (2g-2)m_p
        -
        d\,\operatorname{ord}_{p}(\omega)
    \right),
\end{equation}
where \(m_p\) denotes the coefficient of \(p\) in \(D\).

By construction of the adapted basis,
\[
    (2g-2)m_p
    -
    d\,\operatorname{ord}_{p}(\omega)
    \neq0.
\]
Thus $\beta_{\lambda_0}=0,$
contrary to the choice of \(\lambda_0\). We conclude that
\[
    \beta_\lambda=0
    \qquad
    \text{for every }\lambda\in\Lambda.
\]

It remains to consider
\[
    \xi=\sum_{i=1}^r\alpha_i v_i.
\]
Since the \(v_i\) are vertical deformations, $dq(\xi)=0,$
and the formula \eqref{formula-vertical-inf-invariant} for the infinitesimal invariant along marked
deformations gives
\[
    \delta\nu(y)(\omega\otimes\xi)
    =
    (2g-2)
    \sum_{i=1}^r
    \alpha_i m_i\,\omega(p_i)(v_i)
\]
for every \(\omega\in H^0(C,\omega_C)\).

We now evaluate successively against $\omega_r,\omega_{r-1},\ldots,\omega_1.$
Since \(\xi\in \mathcal N_y^\nu\), evaluating first against \(\omega_r\) and
using \eqref{eq:proof-triangular-vertical} gives
\[
    0
    =
    (2g-2)\alpha_r m_r
    \omega_r(p_r)(v_r).
\]
By construction, $\omega_r(p_r)(v_r)=1,$
and \(m_r\neq0\) by hypothesis. Hence $\alpha_r=0.$
Assume inductively that $\alpha_{j+1}=\cdots=\alpha_r=0.$
Evaluating against \(\omega_j\), the conditions $\omega_j(p_i)=0$ for $i<j$
give
\[
    0
    =
    (2g-2)\alpha_jm_j
    \omega_j(p_j)(v_j).
\]
Since \(m_j\neq0\) by hypothesis and \(\omega_j(p_j)(v_j)=1\), we obtain \(\alpha_j=0\).
Proceeding by reverse induction, we obtain
\[
    \alpha_1=\cdots=\alpha_r=0.
\]
This proves \eqref{eq:main-claim-proof}.
We now draw its two consequences. First, the deformations $v_1,\ldots,v_r,\eta_\lambda$ for $\lambda\in\Lambda$
are linearly independent. Hence, let
\[
    V
    :=
    \left\langle
        v_1,\ldots,v_r,
        \eta_\lambda\ ;\ \lambda\in\Lambda
    \right\rangle.
\]
Since $r+|\Lambda|=g,$ we have $\dim V=g.$
Second, the same claim gives $V\cap \mathcal N_y^\nu=\{0\}.$


Then, we obtain
\[
    \operatorname{codim}\mathcal N_y^\nu\geq g.
\]
Together with the fact that $\operatorname{rk}_y(\nu) \leq g$ \eqref{codim < g},
gives $\operatorname{codim}\mathcal N_y^\nu=g.$

We now prove part~\((1)\). Assume that $a_i=\frac{(2g-2)m_i}{d}$ are positive integers and that
\[
    [C;p_1,\ldots,p_n]
    \in
    \mathcal P(a_1,\ldots,a_n).
\]
We are in Case~3 of the construction. Recall that $H_n
    =
    H^0\left(
        C,
        \omega_C
        \left(-\sum_{i=1}^na_ip_i\right)
    \right)$
is one-dimensional and is generated by a holomorphic differential
\(\omega_{\mathrm{ab}}\) satisfying
\[
    \operatorname{div}(\omega_{\mathrm{ab}})
    =
    \sum_{i=1}^na_ip_i.
\]
Thus the lifted bases of the quotients
\(H_{j-1}/H_j\), together with the \(r\) vertical deformations, provide
exactly \(g-1\) Schiffer deformations; the remaining one-dimensional
space \(H_n\) corresponds precisely to the Abelian differential
\(\omega_{\mathrm{ab}}\).

Let \(V_{\mathrm{ab}}\subset T_yY\) be the span of these \(g-1\)
deformations. The same argument used above, applied only to the lifted
differentials of the quotients and to the vertical deformations, gives
\[
    V_{\mathrm{ab}}\cap \mathcal N_y^\nu=\{0\}.
\]
Indeed, for every lifted differential the coefficient $(2g-2)m_j
    -
    d\,\operatorname{ord}_{p_j}(\omega_{j,s})$
is non-zero, since $\operatorname{ord}_{p_j}(\omega_{j,s})<a_j.$
The only differential for which the corresponding coefficient may
vanish is \(\omega_{\mathrm{ab}}\), for which $\operatorname{ord}_{p_j}(\omega_{\mathrm{ab}})=a_j.$
This is precisely the one direction omitted from the construction.

It follows that $\dim V_{\mathrm{ab}}=g-1$
and hence
\[
    \operatorname{codim}\mathcal N_y^\nu\geq g-1.
\]
Then , by
\eqref{disug g-1}, we have $\operatorname{codim}\mathcal N_y^\nu=g-1,$
which concludes the proof.
\end{proof}

\paragraph{Acknowledgments}
I would like to thank my PhD advisor Gian Pietro Pirola for sharing many of his ideas and insights on
this problem. The author is member of GNSAGA (INdAM).

\newpage
\printbibliography

\end{document}